\newcommand{\C} {\mathbb C}
\newcommand{\fd}{\textit}    
\newtheorem{theorem}{Theorem}[section]
\newtheorem{lemma}[theorem]{Lemma}
\newtheorem{corollary}[theorem]{Corollary}
\newtheorem{proposition}[theorem]{Proposition}
\theoremstyle{definition}
\newtheorem{definition}[theorem]{Definition}
\newtheorem*{thrichotomy}{Thrichotomy}
\begin{document}

\title{On N\"orlund-Voronoi summability and instability of rational maps}

\author{Carlos Cabrera \textsuperscript{*} }

\address{Carlos Cabrera: Instituto de Matem\'aticas\\ Unidad Cuernavaca. UNAM}

\email{carloscabrerao@im.unam.mx}

\author{Peter Makienko}
\address{Peter Makienko: Instituto de Matem\'aticas\\ Unidad Cuernavaca. UNAM}
\email{makienko@im.unam.mx}
\author{Alfredo Poirier }

\address{Alfredo Poirier: Departamento de Ciencias, Secci\'on Matem\'aticas. PUCP}
\email{apoirie@pucp.edu.pe}
\dedicatory{In honor of Misha Lyubich's  60th 
Birthday.}

\begin{abstract}
We investigate the connection between the instability of rational maps and 
summability methods applied to the spectrum of a critical point belonging to the Julia set of
a rational map.  
\end{abstract}
\maketitle

\footnotetext{This work was partially supported by CONACYT CB2015/255633 and PAPIIT IN102515.}

\section{Motivation and main results}

Let $Rat_d$ be the space of all rational maps $R$ of degree $d>0$ defined 
on the Riemann  sphere $\bar{\C}$. Let $Crit(R)$ be the set of critical points of $R$. For $c\in Crit(R)$ the  \textit{individual postcritical} set is 
$$P_c(R)=\overline{ \bigcup_{n>0} R^n(c)}.$$ The \textit{postcritical set} of $R$ is
$$P(R)=\overline{\bigcup_{c\in Crit(R)} P_c(R)}.$$  
The \textit{Julia set} $J(R)$ is the accumulation set of all repelling periodic cycles.
A rational map $R$ is called \textit{hyperbolic} if the postcritical set does not intersect 
the Julia set. 

The Fatou conjecture states that hyperbolic rational maps of degree $d$ form 
an open and dense subset of $Rat_d$. A rational map 
$R\in Rat_d$ is called \textit{structurally stable}  if there exists a neighborhood $U$ of 
$R$ in $Rat_d$ such that for every $Q$ in $U$  there is a quasiconformal conjugation 
between $R$ and $Q$.  A theorem of R. Ma\~n\'e, P. Sad, and D. Sullivan shows that  the set of 
structurally stable maps forms an open and dense subset of $Rat_d$ (see \cite{MSS}). 
Hence the Fatou conjecture can be reformulated in the following way.

\textit{If $R\in Rat_d$ has a critical point $c$ in the Julia set $J(R)$, 
then $R$ is not structurally stable or, 
equivalently, is an unstable map.}  

Recall that a critical point $c$ of a rational map $R$ is called \textit{summable} if the series $\displaystyle{\sum_{n= 0}^\infty \frac{1}{(R^n)'(R(c))}}$ is absolutely convergent.

 Among other results, A. Avila \cite{AvilaInfPert}, G. Levin \cite{LevinAnalytic}, and P. Makienko \cite{MakRuelle} proved the following statement. 

\begin{theorem}\label{th.AvLevMak} If $c\in J(R)$ is a summable critical point with non-zero sum then $R$ is not $J$-stable, whenever $P_c(R)\neq \overline{\C}$.  
\end{theorem}

The condition  $\sum_{n= 0}^\infty\frac{1}{(R^n)'(R(c))}=0$  is a special case which requires additional considerations 
(see again \cite{AvilaInfPert}, \cite{LevinAnalytic},  \cite{LevinPert}, \cite{MakRuelle} and compare with the discussion in Section 3.1).  However, in \cite{LevinPert} it was shown that, with additional conditions on the postcritical set, among all critical points in $J(R)$ with absolutely convergent series   $\sum_{n= 0}^\infty\frac{1}{(R^n)'(R(c))}$ there is at least one critical point with  $\sum_{n= 0}^\infty \frac{1}{(R^n)'(R(c))}\neq 0$.

In this paper we go along with the approach initiated by Avila, Levin, and Makienko to the cases where the series $\sum_{n\geq 0} \frac{1}{(R^n)'(R(c))}$ is absolutely divergent but has radius of 
convergence at least $1$ with respect to the N\"orlund-Voronoi summability method (see definitions and discussion below).  Also,
we present a class of measures  defining directions in the tangent space of $Rat_d$ at $R$ which are not generated by a quasiconformal deformation of $R$.

Given a point $a$, we call the sequence $$\sigma(a)=\{\sigma_n(a)\}=\left\{\frac{1}
{(R^n)'(R(a))}\right\}_{n=0}^\infty$$  \textit{the spectrum of $a$}. 

To avoid technicalities, in this article we always assume that
\begin{itemize}
\item \textit{the rational map $R$  fixes $0$,$1$ and $\infty$,}  \item \textit{there are no critical relations},  and

\item \textit{the individual postcritical sets under discussion are bounded}.
\end{itemize}

For example if there are no critical relations and  $P_c(R)$ does not contains all fixed points of $R$, then a suitable rotation of $R$ satisfies our assumption. However, in many cases our arguments do not need all of these restrictions.

Let $c$ be a critical point for $R$. 
Let us consider the following  trichotomy:
\begin{thrichotomy}\ 

\begin{enumerate}
 \item The sequence $\sigma_n(c)$ converges to $\infty$.
 \item There is a subsequence $\{n_i\}$ such that $\sigma_{n_i}(c)$ converges to $0$.
 \item The number $\liminf_n |\sigma_n(c)|$ is neither $ 0$ nor $ \infty.$ 
\end{enumerate}
\end{thrichotomy}

Let us comment on the trichotomy above. 

\begin{itemize}
\item Conjecturally, the case (1) of the thrichotomy should imply that the critical point $c$ belongs to the 
Fatou set. For example, let  $R$ be an unimodal polynomial whose critical point $c$ has spectrum $\sigma(c)$ with bounded 
multiplicative oscillation   ($\sup\left|\frac{\sigma_{n+1}(c)}{\sigma_n(c)}\right|<\infty$). Then $c$ belongs to the Fatou set by
Ma\~n\'e's  theorem (see \cite{Man}). 
 \item Theorem \ref{th.AvLevMak} deals with the
 case (2) for $\sigma(c)\in \ell_1$ (see \cite{AvilaInfPert}, \cite{LevinAnalytic},\cite{LevinPert} and  \cite{MakRuelle}). 
\item In case (3)  assume $\sigma(c)\in \ell_\infty$, then  $\sigma(c)$ has 
bounded multiplicative oscillation. If, for instance $R$ is unimodal, again by Ma\~n\'e's theorem 
the map $R$ is parabolic and hence is not structurally stable.
\end{itemize}

Where $\ell_1$ and $\ell_\infty$ are, as usual, the Banach spaces of absolutely summable and bounded sequences, respectively. 

The following proposition and its subsequent corollaries are the main motivations to consider the thrichotomy above.

\begin{proposition}\label{prop.motivation} Let $R$ be a stable rational map in the  complex one-parameter family $R_\lambda=R+\lambda$
and $c$ be a  critical  point on the Julia set  $J(R)$ with bounded individual 
postcritical set $P_c(R)$.  Consider the partial sums  $\displaystyle{S_n=\sum_{i=0}^{n}} \sigma_i(c)$.  Then for all $n$ we have $$\left|{S_n}\right|\leq C |\sigma_n(c)|,$$
for some constant $C.$
\end{proposition}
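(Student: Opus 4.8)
The plan is to interpret the partial sum $S_n$ as the derivative, with respect to a one-parameter deformation of $R$, of the $n$-th point of the critical orbit, and then to bound that derivative using the holomorphic motion furnished by structural stability together with the boundedness of $P_c$. The deformation I would use is the translation $R_\lambda = R + \lambda$: it stays in $Rat_d$ for $|\lambda|$ small, and its infinitesimal generator is the constant vector field $v\equiv 1$, since $\partial_\lambda(R(z)+\lambda)|_{\lambda=0}=1$.

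First I would record the conjugacy furnished by stability. Since $R$ is stable, for small $\lambda$ the map $R_\lambda$ is quasiconformally conjugate to $R$, and by the $\lambda$-lemma of Ma\~n\'e--Sad--Sullivan these conjugacies assemble into a holomorphic motion $h_\lambda$ of $J_R$ with $h_0=\mathrm{id}$ and $h_\lambda\circ R = R_\lambda\circ h_\lambda$ on $J_R$. Writing $\dot h=\partial_\lambda h_\lambda|_{\lambda=0}$ for the velocity field and differentiating the conjugacy relation at $\lambda=0$ produces the cohomological identity $\dot h(R(z))=1+R'(z)\,\dot h(z)$, valid for $z\in J_R$.

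Next I would evaluate this identity along the critical orbit. Setting $a_i=R^i(c)$ and $g_i=\dot h(a_i)$ gives the recursion $g_{i+1}=R'(a_i)\,g_i+1$; crucially, taking $i=0$ and using $R'(c)=0$ forces $g_1=1$. Multiplying the recursion by $\sigma_i(c)$ and using $\sigma_{i-1}(c)=R'(R^i(c))\,\sigma_i(c)$ turns it into the exact difference $g_{i+1}\sigma_i(c)-g_i\sigma_{i-1}(c)=\sigma_i(c)$, so summing from $i=1$ to $n$ telescopes on the left and sums to $S_n-\sigma_0(c)$ on the right. The boundary term cancels precisely because $g_1=1$ and $\sigma_0(c)=1$, yielding the clean identity $S_n=g_{n+1}\,\sigma_n(c)=\dot h\bigl(R^{n+1}(c)\bigr)\,\sigma_n(c)$. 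It is this vanishing of the boundary term, forced by $c$ being critical, that delivers a genuinely homogeneous bound rather than one with an additive constant.

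The conclusion is then immediate once $\dot h$ is known to be bounded along the critical orbit: because $R^{n+1}(c)\in P_c$ and $P_c$ is bounded, hence compactly contained in $\C$, the quantity $C=\sup_{P_c}|\dot h|$ is finite and $|S_n|\le C\,|\sigma_n(c)|$. I expect the main obstacle to be exactly the justification that $\dot h$ is a well-defined, uniformly bounded vector field on $P_c$. Here one must invoke the normality of the holomorphic motion, using the $\lambda$-lemma to bound $h_\lambda$ uniformly in the spherical metric and thereby obtain a uniform spherical bound on $\dot h$ over $J_R\supseteq P_c$, and then use that $P_c$ stays away from $\infty$ to convert this spherical bound into a Euclidean one. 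This is the only place where the hypotheses that $R$ is stable and that $P_c$ is bounded genuinely enter.
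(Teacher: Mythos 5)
Your proposal is correct and is essentially the paper's own proof: the paper also deforms by $R_t=R+t$, differentiates the Ma\~n\'e--Sad--Sullivan conjugacy at $t=0$ to get the identity $F(R(z))-R'(z)F(z)=1$, iterates it from $F(R(c))=1$ (your telescoping sum is the same computation) to obtain $S_n=F(R^n(R(c)))\,\sigma_n(c)$, and concludes by bounding the variation field $F$ (your $\dot h$) on the bounded set $P_c$. The only cosmetic difference is that the paper works with the global quasiconformal conjugacies $f_t$ of $\C$ and the continuity of $F$, rather than with the motion restricted to $J_R$ and a normality argument.
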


The proof is contained in Lemma 5 in \cite{MakRuelle} (see also Avila 
\cite{AvilaInfPert}). The quadratic polynomial case was also noted by  Levin 
in \cite{LevinAnalytic}. However, the proof is elementary 
and we include it in the next section for the sake of completeness.

\begin{corollary}\label{cor.motivation} Let $R$ be a  rational map 
and $c$ be a  critical  point on the Julia set  $J(R)$ with bounded $P_c(R)$.
Each of the following criteria implies that the map $R$ is unstable in the family $R_\lambda=R+\lambda$, for $\lambda\in \C$.

\begin{itemize}
 
\item In case (2) of the thrichotomy, a subsequence  $\sigma_{n_i}(c)$ converges to 
$0$ and satisfies $\limsup|S_{n_i}|>0$. 
 
\item In case (3) of the thrichotomy,  a subsequence  $\sigma_{n_i}(c)$ converges to a 
non-zero finite limit but  $\limsup |S_{n_i}|=\infty$. 
 \end{itemize}
\end{corollary}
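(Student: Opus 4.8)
The plan is to derive both items directly from Proposition \ref{prop.motivation} by contraposition. The key observation is that the proposition gives, for any \emph{stable} map with a critical point $c\in J_R$ of bounded $P_c$, the uniform bound $|S_n|\le C|\sigma_n(c)|$ for all $n$ and some fixed constant $C$. So to prove that $R$ is \emph{unstable}, I would assume for contradiction that $R$ is stable, invoke the proposition to obtain this inequality, and then restrict it to the subsequence $n_i$ furnished by situation (2) or (3). In each case the hypotheses are arranged precisely so that the left-hand side $|S_{n_i}|$ and the right-hand side $C|\sigma_{n_i}(c)|$ have incompatible asymptotic behaviour, producing the contradiction.

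Concretely, for the first item I would argue as follows. In situation (2) we are given that $\sigma_{n_i}(c)\to 0$ while $\limsup|S_{n_i}|>0$. Passing to the inequality $|S_{n_i}|\le C|\sigma_{n_i}(c)|$ and taking $\limsup$ on both sides, the right-hand side tends to $0$ (since $C$ is a fixed constant and $\sigma_{n_i}(c)\to 0$), forcing $\limsup|S_{n_i}|\le 0$. This contradicts the assumption $\limsup|S_{n_i}|>0$, so $R$ cannot have been stable.

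For the second item the mechanism is the mirror image. In situation (3) we are given $\sigma_{n_i}(c)\to\alpha$ with $\alpha$ finite and non-zero, and $\limsup|S_{n_i}|=\infty$. Then the right-hand side $C|\sigma_{n_i}(c)|$ converges to the finite value $C|\alpha|$, so it is a bounded sequence; but the left-hand side $|S_{n_i}|$ is unbounded by hypothesis. This directly violates $|S_{n_i}|\le C|\sigma_{n_i}(c)|$ for large $i$, again contradicting stability.

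There is essentially no hard step here: the corollary is a formal packaging of the proposition, and the only thing to verify carefully is that the hypothesis "$P_c$ bounded" in the corollary matches the hypothesis of Proposition \ref{prop.motivation}, which it does verbatim. The one point deserving a word of caution is that the proposition is stated for stable maps and we are running it contrapositively, so I would make explicit that the assumption being contradicted is stability and that the constant $C$ depends only on $\sup_{z\in P_c}|F(z)|$, which is finite exactly because $P_c$ is bounded and $F$ is continuous. Once that is noted, the subsequential limit computations above close each case immediately.
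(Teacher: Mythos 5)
Your proof is correct and is exactly the argument the paper intends: the paper offers no separate proof, presenting the corollary as an elementary consequence of Proposition \ref{prop.motivation}, namely assuming stability, restricting the bound $|S_n|\leq C|\sigma_n(c)|$ to the subsequence $n_i$, and deriving the contradiction in each case just as you do.
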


\begin{corollary}\label{cor.positiverat}
 Let $R$ be a rational map such that  $R'\geq 0$ on $P_c(R)$ for a  critical point $c\in J(R)$ with $\sigma(c)$ satisfying either one of the conditions (2) or (3) in the trichotomy, then $R$ is unstable in the  family $R_\lambda=R+\lambda$, for $\lambda\in \C$.
\end{corollary}

The following theorem generalizes Corollary \ref{cor.positiverat}.

\begin{theorem}\label{th.Lyapunovnon}
 Let $R$ be a rational map. Assume there exists a point $x\in J(R)$ such that $R'(z)\geq 0$ for  $z\in \bigcup_{n=1}^\infty \{R^n(x)\}$ and satisfying the following conditions 
 \begin{enumerate}
  \item The set  $\bigcup_{n=1}^\infty \{R^n(x)\}$ is bounded
  and does not intersect the set of critical points of $R$.
  \item $R(x)$ has non-negative lower Lyapunov exponent. 
  \item $\sigma(x)\notin \ell_1$.
  
\end{enumerate}
Then $R$ is unstable in the family $R_\lambda=R+\lambda$, for $\lambda\in \C.$

Even more, if $x$ is a critical point then condition (3) is redundant. 
\end{theorem}

This theorem will be proved after 
Theorem \ref{TheoremA}.  Theorem \ref{th.Lyapunovnon} leads to the following question. 

\textit{Does there exists a rational map $R$ such that $\sigma(x)\in \ell_1$ for Lebesgue almost every point of $J(R)$?}

It seems that the non-negative condition of $R'$ on $P_c(R)$ is extremal and rare within rational dynamics. However, there are many examples of critical circle maps conjugated to  rational maps satisfying this condition. Here a critical circle map is a map which leaves the circle invariant with a critical point in the circle. For entire or meromorphic maps there are simple maps holding this condition. For example, consider  $\lambda e^{z^2+c}$, where $\lambda$ and $c$ are real numbers with $\lambda>0$, which have non-negative derivative on the orbit of $0$. 

In the next section, we will give a family of examples of rational maps with real coefficients and non-negative derivative on an individual postcritical set $P_c(R)$.  Thus Corollary \ref{cor.positiverat} produces new  examples of unstable real rational maps, compare with the class of real rational maps discussed by  W. Shen in \cite{Shenrat}.

The following corollary is the main motivation for the constructions in the present work.  To study  the spectrum $\sigma(c)$ of a critical point $c$, we will consider the Abel averages $A_\lambda=(1-\lambda) \sum_{n= 0}^\infty \sigma_n(c) \lambda^n$. Recall that a sequence $\{a_n\}$ is \textit{Abel convergent}
to $L$, which is allowed to be infinity, whenever $A_\lambda=(1-\lambda) \sum_{n= 0}^\infty a_n \lambda^n$ is finite for  $\lambda<1$ and  $$\lim_{\lambda\rightarrow 1} A_\lambda=L.$$

\begin{corollary}\label{Cor.AbelAver} Let $R$ be a rational map and $c$ be a critical point such that 
$\sigma_{n}(c)=o(n)$. Suppose further that the individual postcritical set $P_c(R)$ is 
bounded. If $\sigma(c)$ is not Abel convergent to $0$, then 
$R$ is an unstable map  in the one-parameter family $R_\lambda=R+\lambda$. 
\end{corollary}

Let us note that if a sequence $\{a_n\}$ converges then the sequence of Abel averages converges to the same limit.  The reciprocal fails to be 
true.
Many unbounded sequences are Abel convergent.

According to results by  H. Bruin and S. van Strien \cite{BruinStrienSumm} and, independently, by J. Rivera-Letelier \cite{RiveraSumma},  the Julia set $J(R)$ has Lebesgue measure $0$ whenever $J(R)$
contains only critical points $c$ satisfying
$\sigma(c)\in\ell_1$. Besides, examples given by X. Buff and A. Cheritat \cite{BuffCheritat} and  by A. Avila and M. Lyubich \cite{AviLyuPosMea} of 
quadratic polynomials with Julia set of positive measure, show that there are rational maps $R$ with a critical point $c\in J(R)$ with  $\sigma(c)\notin \ell_1.$  Perhaps $\sigma(c)$ is not in $\ell_1$ for every infinitely renormalizable unimodal map. The referee kindly pointed out that $\sigma(c)\notin \ell_1$ for every Feigenbaum quadratic polynomial.

If $J(R)$ contains several critical points and one of them, say $c$, has $\sigma(c)\in \ell_1 $, then $R$ is unstable whenever $P_c(R)$ satisfies additional restrictions (see  Avila 
\cite{AvilaInfPert},  Levin \cite{LevinAnalytic}, \cite{LevinPert}, and Makienko \cite{MakRuelle}). 

To formulate our main results we need the following construction.\\

\textbf{Voronoi measures}. First we discuss some weak conditions on the behavior of $\sigma(c)$, for $c\in J(R)$, that include many instances of  the cases (1), (2) and (3) of the thrichotomy above. In order to do so, we  consider  averaged sequences of the spectrum, that is, the N\"orlund-Voronoi averages. The construction of such averages uses the so-called N\"orlund summability method 
which was  firstly published  by G. Voronoi in 1902. An english version of Voronoi's  work can be found at \cite{VoronoiSums}.

Fix a sequence of non-negative real numbers $q_n\geq 0$ such that $q_0>0$ and $\lim_n\frac{q_n}{Q_n}=0$, where $Q_n=q_0+...
+q_n$ are the partial sums. For a sequence of complex numbers $\{x_n\}$, the values  
$$t_n=\frac{q_nx_0+q_{n-1}x_1+...
+q_0x_n}{Q_n}$$ are called the \textit{N\"orlund averages} with respect to the  sequence $\{q_n\}.$

If  $$\limsup \sqrt[n]{|t_n|}\leq 1,$$ then we say that $\{x_n\}$ 
is \textit{N\"orlund regular}, or \textit{$N$-regular} for short. 

Indeed, given a sequence $\{a_n\}$, the number $\liminf \frac{1}{\sqrt[n]{|a_n|}}$ is the radius of convergence of the power series $\sum_{n= 0}^\infty a_n z^n$. In what follows we will call the number $\liminf \frac{1}{\sqrt[n]{|a_n|}}$ the \textit{radius of convergence of the sequence} $\{a_n\}$.

A convenient way to think about the N\"orlund method is to regard the sequence $\{q_n\}$ as a linear operator $N:\Theta \rightarrow \Theta$, where  
$\Theta$ is the linear space of all complex sequences, and  $N$ is 
the infinite matrix with coordinates $\displaystyle{N_{m,n}=\frac{q_{m-n}}
{Q_m}}$ for $n\leq m$ and $N_{m,n}=0$ for $n>m.$ We call $N$ the \textit{N\"orlund matrix associated to} $\{q_n\}.$  Hence, a sequence $\{x_n\}$ is $N$-regular if and only if $N(\{x_n\})$  has radius of convergence at least $1.$

It is  known (see for example \cite{Boos}) that $N$ 
defines a continuous linear endomorphism of $\ell_\infty$. If $\mathcal{C}\subset \Theta$ is the subspace of all convergent sequences, then  $N(\mathcal{C})\subset \mathcal{C}$ and, furthermore, 
$\lim_n x_n=\lim_n t_n$, 
for $\{x_n\}\in \mathcal{C}$.  

The next lemma is not difficult to prove and appears as Lemma 3.3.10 in  \cite{Boos}.

\begin{lemma}\label{lemma.Norlund}
 Let $N$ be the N\"orlund matrix associated to $\{q_n\}$  and $\{x_n\}$ be a N\"orlund regular sequence. Then the following properties 
 hold.
 \begin{enumerate}
  \item The series $q(\lambda)=\sum_{n= 0}^\infty q_n \lambda^n$ and $Q(\lambda)=\sum_{n= 0}^\infty Q_n\lambda^n$ 
  converge for $|\lambda|<1.$
  \item The series $x(\lambda)=\sum_{n= 0}^\infty x_n\lambda^n$ converges in a 
  neighborhood of $0.$
  \item The convolution series $$[C_N(\{x_n\})](\lambda)=\sum_{n= 0}^\infty(\sum_{i=0}^n q_ix_{n-i})\lambda^n=\sum_{n= 0}^\infty t_n Q_n\lambda^n$$ converges for $|\lambda|<1.$ 
  
 \end{enumerate}

\end{lemma}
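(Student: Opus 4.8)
The plan is to prove Lemma \ref{lemma.Norlund} by unwinding the definition of $N$-regularity into statements about radii of convergence. The hypothesis $\limsup\sqrt[n]{|t_n|}\le 1$ is precisely the Cauchy–Hadamard statement that the power series $\sum t_n\lambda^n$ has radius of convergence at least $1$; the three conclusions are really three consequences of combining this with the structure of the N\"orlund matrix, so I would treat each in turn.

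First I would establish (1). Since $0\le q_n\le Q_n$ and, by the standing hypothesis, $q_n/Q_n\to 0$, the sequence $Q_n$ is nondecreasing and I would argue that $\limsup\sqrt[n]{Q_n}=1$. The inequality $\ge 1$ is immediate from $Q_n\ge q_0>0$, and for $\le 1$ I would use that $q_n/Q_n\to 0$ forces $Q_n$ to grow subexponentially: from $Q_n=Q_{n-1}(1+q_n/Q_{n-1})$ one sees $\log Q_n-\log Q_{n-1}\to 0$, whence $(\log Q_n)/n\to 0$ by Ces\`aro. Therefore $Q(\lambda)=\sum Q_n\lambda^n$ converges for $|\lambda|<1$, and since $0\le q_n\le Q_n$ the comparison test gives convergence of $q(\lambda)=\sum q_n\lambda^n$ on the same disk.

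Next, (3) is the heart of the matter. By definition $t_nQ_n=\sum_{i=0}^n q_i x_{n-i}$, so the convolution series $[N(x_n)](\lambda)=\sum_n t_nQ_n\lambda^n$ coincides termwise with $\sum_n\bigl(\sum_{i=0}^n q_ix_{n-i}\bigr)\lambda^n$. The Cauchy–Hadamard criterion applied to $t_nQ_n$ gives $\limsup\sqrt[n]{|t_nQ_n|}\le \bigl(\limsup\sqrt[n]{|t_n|}\bigr)\bigl(\limsup\sqrt[n]{Q_n}\bigr)\le 1\cdot 1=1$, using $N$-regularity and the radius computation from (1). Hence this series converges for $|\lambda|<1$.

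Finally I would derive (2) from (3). Once the convolution product $q(\lambda)x(\lambda)=[N(x_n)](\lambda)$ is known to converge on the unit disk, and $q(\lambda)$ converges there with $q(0)=q_0>0$, the factor $1/q(\lambda)$ is holomorphic near $0$; dividing shows $x(\lambda)=[N(x_n)](\lambda)/q(\lambda)$ has a convergent power expansion in a neighborhood of $0$, which is exactly (2). The main obstacle I anticipate is the clean justification that $\limsup\sqrt[n]{Q_n}=1$ from the hypothesis $q_n/Q_n\to 0$ alone, and the care needed to multiply $\limsup$'s of radii without assuming the limits exist; everything else is a direct application of Cauchy–Hadamard and the Cauchy product formula, which is why the paper remarks the lemma is not difficult.
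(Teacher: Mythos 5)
Your proof is correct. The paper itself offers no argument for this lemma --- it simply cites Lemma 3.3.10 of \cite{Boos} --- so there is no internal proof to compare against; your write-up supplies a complete, self-contained justification. The three pillars of your argument are all sound: the Ces\`aro/Stolz step showing $\tfrac{1}{n}\log Q_n \to 0$ (note $q_n/Q_{n-1} = \tfrac{q_n/Q_n}{1-q_n/Q_n}\to 0$, so the logarithmic increments do tend to $0$), hence $\lim_n \sqrt[n]{Q_n}=1$ and part (1) by comparison $q_n\le Q_n$; the Cauchy--Hadamard bound $\limsup_n \sqrt[n]{|t_nQ_n|}\le \bigl(\lim_n\sqrt[n]{Q_n}\bigr)\bigl(\limsup_n\sqrt[n]{|t_n|}\bigr)\le 1$, which is legitimate precisely because the first factor is an honest limit equal to $1$ (so the limsup-product subtlety you flag does not arise), giving part (3); and the formal-series division $x=q^{-1}\cdot[N(x_n)]$, valid since $q_0\neq 0$ makes $1/q(\lambda)$ holomorphic near $0$ and the Taylor series of $[N(x_n)](\lambda)/q(\lambda)$ coincides with the formal quotient, giving part (2). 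Your ordering (1), (3), (2) avoids any circularity, and the division argument in (2) is in fact the same device the paper deploys immediately after the lemma to continue $x(\lambda)$ to a meromorphic function on the unit disk, so your route is fully consistent with how the lemma is used.
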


According to the lemma above, if $\{x_n\}$ is $N$-regular, then from the relations 
$$[C_N(\{x_n\})](\lambda)=x(\lambda)q(\lambda)$$ and $q_0\neq 0$ we conclude two facts. First,  that the radius of convergence of $\{x_n\}$ is 
non-zero and $x(\lambda)$ can be continued to a meromorphic function $X(\lambda)$ 
on the unit disk. Second, as $q(\lambda)$ has non negative Taylor coefficients, there are no zeros on the interval $[0,1)$, and so the  poles of $X(\lambda)$ lay outside $[0,1)$.

Reciprocally, if the sequence $\{x_n\}$ has non-zero radius of convergence and  
$x(\lambda)=\sum_{n= 0}^\infty x_n \lambda^n$ can be extended to a meromorphic function 
$X(\lambda)=\frac{\psi(\lambda)}{\phi(\lambda)}$ such that  $\phi$ and $\psi$ are
holomorphic and $\phi$ has
non-negative Taylor coefficients with $\phi(0)>0$. Then the Taylor coefficients of $\phi$ 
define a N\"orlund operator $N$ such that $\{x_n\}$ is an $N$-regular sequence.  For instance, if $\{x_n\}$ itself has radius of convergence at least 
$1$, then the choice $q_0=1$ and $q_n=0$ 
for all $n>0$ gives the identity matrix as a  N\"orlund matrix
or, equivalently,   $t_n=x_n$.

Another example of a  N\"orlund matrix is when $q_n=1$ for every $n$.  Then the 
N\"orlund averages of $\{x_n\}$ becomes the Ces\`aro averages.  Indeed, the N\"orlund averages is a 
generalization of  both iterated Ces\`aro and Abel averages for suitable sequences $\{q_n\}$. Moreover, Ces\`aro and Abel convergent sequences have  
non-zero radius of convergence.  In our situation, the spectrum $\sigma(c)$ 
has non-zero radius of convergence if and only if the lower Lyapunov exponent of $R(c)
$ is bounded below.  In fact, we believe the following conjecture holds true.
\smallskip

\textbf{Conjecture.} \textit{For every rational map $R$ such that $J(R)$ has positive Lebesgue measure, there exists a critical value $v\in J(R)$ with finite lower Lyapunov exponent.}

\smallskip
In  \cite{PrzyNonNegLya}, Przytycki proved that the Lyapunov exponents are 
non-negative for almost every point of $J(R)$ with respect to any 
finite invariant measure supported on the Julia set.
In the case of unimodal polynomials,   Levin,  Przytycki, and Shen 
(\cite{LevinPrzytyckiShen}) showed that the radius of convergence
 of the spectrum $\sigma(c)$ is always at least $1$ whenever $c$ 
belongs to the Julia set.  Moreover, they proved 
the stronger statement that the spectrum $\sigma(z)$ has radius of convergence at least $1$ not only for $z=c$ but  for Lebesgue almost every point $z$ of the Julia set $J(R)$.

 On the other hand, let us note that 
geometrically  divergent sequences are not N\"orlund regular. In particular, for a 
structurally stable $R$, if $\sigma(c)$ is N\"orlund regular then $c\in J(R)$. 

Given the data:
 
 \begin{enumerate}
 \item A sequence of points $\{z_n\}\subset \C$.
 \item A N\"orlund matrix associated to a sequence $\{q_n\}$.
\item A sequence $\{x_n\}$ such that $N(\{|x_n|\})=\{t_n\}$ has radius of 
convergence at least $1$.
\end{enumerate}
We associate the measure  $$\nu_\lambda=(1-\lambda)\sum_{n= 0}^\infty T_n 
\lambda^n,$$ where $T_n=q_nx_0\delta_{z_0}+
q_{n-1}x_1\delta_{z_1}+...+q_0x_n\delta_{z_n}.$ Then by Part (3) of Lemma 
\ref{lemma.Norlund}, the measures $\nu_\lambda$ form an analytic family of 
finite measures over the open unit disk. 

\begin{definition}
 Given a regular sequence $\{x_n\}$, we call a finite complex valued measure $\nu$ a \textit{Voronoi measure with respect
 to $N$ and the sequence}  $\{ z_n\}$ if there exists a complex sequence $\{\lambda_n\}$ with  $\lambda_n\rightarrow 1$ (and $|\lambda_n|< 1$) such that the sequence $\frac{\nu_{\lambda_n}}{\|\nu_{\lambda_n}\|}$ converges to $\nu$ in the $*$-weak topology, here $\|\nu_{\lambda_n}\|$ denotes the total variation of $\nu_{\lambda_n}.$  In order words, the  complex  projective 
 classes $[\nu_{\lambda_n}]$ converge to the complex projective class $[\nu]$, whenever $\nu\neq 0.$  If $\{x_n\}=\sigma(c)$ and $z_n=R^n(R(c))$ for a suitable critical point $c\in J(R)$, then we call  $\nu$ \textit{a Voronoi measure associated to $c.$}
\end{definition}

Let us mention that in \cite{LevinAnalytic},  complex projective classes of related measures were discussed.

Note that the support of a Voronoi measure associated to $c$ belongs to the individual postcritical set $P_c(R)$.

Voronoi measures always exist for every regular sequence $\{x_n\}$ and every sequence of points $\{z_n\}$. However, a Voronoi measure may not be uniquely determined by fixing either one of the sequences $\{x_n\}$ or $\{z_n\}$.  

Fix $\{x_n\}$ and $\{z_n\}$, according to the Consistence Theorem (see Theorem 17 in page 65 of the Hardy's book \cite{HardyDivergent}), if for two given N\"orlund matrices $N$ and $N'$  the N\"orlund averages  $\{t_n\}$ and $\{t'_n\}$ of $\{x_n\}$ converges then $\lim t_n =\lim t'_n.$ Assume that $N$ and $N'$ send the  sequence $\{x_n \phi(z_n)\}$ to convergent sequences, for every $\phi\in C(\overline{\bigcup \{z_n\}})$ the space of continuous functions on $\overline{\bigcup_{n= 0}^\infty \{z_n\}}.$  Then, by the Consistence Theorem, the set of Voronoi measures for $N$ coincides  with the set of Voronoi measures for $N'.$

Since there are Abel convergent  sequences that are not  Ces\`aro convergent, then it is possible that different N\"orlund matrices have different sets of Voronoi measures.

Now fix a N\"orlund matrix $N$, a regular $N$-sequence  $\{x_n\}$ and a sequence of points $\{z_n\}$, then the correspondence $\phi\mapsto \{\phi(z_n)\}$, for $\phi \in  C(\overline{\bigcup_{n= 0}^\infty \{z_n\}})$, is a continuous linear operator $T:C(\overline{\bigcup_{n= 0}^\infty \{z_n\}})\rightarrow \ell_{\infty}$. Hence, the family of measures $\omega_\lambda=\frac{\nu_\lambda}{\|\nu_\lambda\|}$ induces a family of functionals $L_\lambda(T(\phi))=\int \phi \, d\omega_\lambda$ which extends to an analytic uniformly bounded family of functionals on $\ell_\infty.$ Using arguments from functional analysis we have the following observation.

\begin{lemma}\label{l.uniqueness}
 Let $\{x_n\}$ be an $N$-regular sequence for the identity matrix $N$. Let $L_\lambda$ be the functionals on $\ell_\infty$ constructed above, assume that  the $*$-weak accumulation set of $L_{\lambda}$ is a single point set  then $\{x_n\}\in \ell_1$, for $\lambda \rightarrow 1$ and $|\lambda|<1$,
\end{lemma}

For convenience of the reader we prove this lemma in the following section.  

From Lemma \ref{l.uniqueness}, even in the simplest case, the accumulation set of $\{\omega_\lambda\}$, regarding each $\omega_\lambda$ as a  functional in $\ell_\infty$, is not unique whenever the sequence $\{x_n\}\notin \ell_1$.  

For instance, it is possible that there exists a bounded sequence $\{x_n\}$ such that the function given by $\phi(\lambda)=\frac{\sum_{n=0}^\infty x_n \lambda^n}{\sum_{n=0}^\infty | x_n\lambda^n|}$ is not continuous at $1$ from the interior of the unit disk. Then the set of Voronoi measures  for $\{x_n\}$ has more than one point regardless of the choice of the sequence $\{z_n\}$.  

Nevertheless,  it is possible to show that the space of all Voronoi measures, which are of Mergelyan type (see definition below), associated to a critical point  $c\in J(R)$ with bounded individual postcritical set, is finitely dimensional.

We call a finite complex valued measure $\mu$  a \textit{Mergelyan type measure}, 
or  an \textit{$M$-measure} for short, when its Cauchy transform $f_\mu(z)=\int_\C 
\frac{d\mu(t)}{t-z}$ is not identically $0$ outside the support of $\mu$.

For $K\subset \C$  compact, let $C(K)$ be the space of continuous functions on 
$K$  and $Rat(K)$ be the space of rational functions restricted to $K$  with poles outside of 
$K$.  If $Rat(K)$ is dense in $C(K)$ with respect to uniform convergence,  then any 
complex valued finite measure supported on $K$ is an $M$-measure. When 
either $m(K)=0$ or $\inf \{diam(W): W \textnormal{ component of } \C\setminus 
K\}>0$, by classical results, we get $\overline{Rat(K)}=C(K)$. For a more deep 
treatment of the theory see for example the book by T. Gamelin 
\cite{GamelinUniform}.

As a consequence, if $R$ is a map with a completely invariant Fatou
component,  then every finite complex valued non-zero measure supported on $J(R)$ is an $M$-measure. 
In particular, this is the case when $R$ is a polynomial.

Now we are ready to formulate our first main theorem.

\begin{theorem}\label{th.Norlundregular}
 Suppose that for a critical point $c\in  J(R)$, with bounded $P_c(R)$, the sequence $\{|
 \sigma_n(c)|\}$ is $N$-regular for a N\"orlund matrix $N$.
 If a  Voronoi measure $\nu$ associated to $c$ is an $M$-measure, then 
 $R$ is unstable. \end{theorem}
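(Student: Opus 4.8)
The plan is to identify the Voronoi measure $\nu$ with an Abel measure associated to $c$ and then to run the instability mechanism behind Theorem \ref{Mergelyan}. Write $a=R(c)$ and $x_n=\sigma_n(c)$. Collecting the Dirac masses in $T_n=\sum_{i=0}^n q_{n-i}x_i\delta_{R^i(a)}$ by the exponent of the base point rather than by level, one gets $\sum_n T_n\lambda^n=q(\lambda)\rho_\lambda$ with $\rho_\lambda=\sum_i\sigma_i(c)\lambda^i\delta_{R^i(a)}$, so that $\nu_\lambda=(1-\lambda)q(\lambda)\rho_\lambda$. For $\rho_\lambda$ to be a genuine finite measure I must control $\sum_i|\sigma_i(c)|\,|\lambda|^i$, and here $N$-regularity of $\{|\sigma_n(c)|\}$ is exactly what is needed: by Lemma \ref{lemma.Norlund} the convolution series $\sum_n t_nQ_n\lambda^n=q(\lambda)\sum_i|\sigma_i(c)|\lambda^i$ converges on the unit disk, and since $q(\lambda)>0$ on $[0,1)$ with all coefficients non-negative, this forces $\sum_i|\sigma_i(c)|\lambda^i<\infty$ for $|\lambda|<1$. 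Thus $x(\lambda)=\sum_n\sigma_n(c)\lambda^n$ has radius of convergence at least $1$ and $(1-\lambda)\rho_\lambda$ is precisely the Abel approximant of $\sigma(c)$ based at $a$.

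Next I would compare complex projective classes. From $\nu_\lambda=q(\lambda)\big[(1-\lambda)\rho_\lambda\big]$ and the fact that $q$ is holomorphic with only isolated zeros on the disk (recall $q_0>0$), the projective class $[\nu_\lambda]$ coincides with that of the Abel approximant $(1-\lambda)\rho_\lambda$ at every $\lambda$ where $q(\lambda)\neq0$. Hence, starting from the sequence $\lambda_i\to1$ that defines $\nu$---and perturbing it slightly if it should meet a zero of $q$---the two projective limits agree, so the non-zero Voronoi measure $\nu$ is a non-zero Abel measure with respect to $c$ in the sense of the earlier definition.

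It then remains to run the instability argument that proves Theorem \ref{Mergelyan} on this Abel measure. Assuming $R$ structurally stable, Ma\~n\'e--Sad--Sullivan supplies the continuous variation $F$ with $F(R(z))-R'(z)F(z)=1$, and as in Proposition \ref{prop.motivation} this gives $\sigma_i(c)\,F(R^i(a))=S_i$. Pairing $\nu_\lambda=(1-\lambda)q(\lambda)\rho_\lambda$ against the bounded Cauchy kernels $\frac{1}{t-z}$ for $z$ off the orbit closure---rather than against $F$ itself, so as not to require $F$ bounded on a possibly unbounded $\mathrm{supp}\,\nu$---expresses $f_{\nu_\lambda}(z)$ through the orbit sums, and the projective limit then determines $f_\nu$. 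Since $\nu$ is an $M$-measure, $f_\nu\not\equiv0$ off $\mathrm{supp}\,\nu\subset J_R$, and the cohomological equation forces this non-zero holomorphic datum to be invariant under the associated transfer operator; for a structurally stable map carrying a critical point in its Julia set such a non-trivial invariant object cannot exist, which is the sought contradiction.

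I expect the main obstacle to be precisely this last step---converting the non-vanishing Cauchy transform into a bona fide invariant line field on $J_R$ and contradicting stability---together with the bookkeeping needed to pass the Cauchy transforms to the projective limit when $\mathrm{supp}\,\nu$ is not compact. I also note that the fixed-point hypothesis appearing in Theorem \ref{Mergelyan} should be dispensable here: there it serves to manufacture a Mergelyan measure, whereas we assume $\nu$ Mergelyan outright, so only the core contradiction, which does not reference fixed points, needs to be transcribed.
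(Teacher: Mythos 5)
Your first two paragraphs are correct, and they in fact sharpen the paper's own argument. The identity $\sum_n T_n\lambda^n=q(\lambda)\rho_\lambda$, i.e.\ $\nu_\lambda=q(\lambda)e_\lambda$ with $e_\lambda=(1-\lambda)\rho_\lambda$, is exactly the paper's starting point ($q(\lambda)\cdot e_\lambda=\nu_\lambda$ in its proof of Theorem \ref{th.Norlundregular}). The paper, however, only records that $\sigma(c)$ has \emph{some} radius of convergence $r>0$ and then continues $e_\lambda$ meromorphically to the whole disk as $E_\lambda=\nu_\lambda/q(\lambda)$, running the argument on the extension $E_v(z,\lambda)$; your observation that $N$-regularity of the \emph{non-negative} sequence $\{|\sigma_n(c)|\}$ already forces radius of convergence at least $1$ (since $q_0|\sigma_n(c)|\le t_nQ_n$ and $Q_n^{1/n}\to 1$) is correct and makes that continuation unnecessary, so a Voronoi measure is literally an Abel measure. (One small simplification: no perturbation of the $\lambda_i$ is needed, because if $q(\lambda_i)=0$ then $\nu_{\lambda_i}=0$ and its projective class is undefined, so the defining sequence avoids the zeros of $q$ automatically.)

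The proof is nevertheless incomplete, for two reasons. First, you cannot finish by quoting Theorem \ref{Mergelyan} as a black box: that theorem carries the hypothesis that $P_c$ omits a fixed point of $R$, which Theorem \ref{th.Norlundregular} does not grant, and your reason for discarding it misreads its role. Theorem \ref{Mergelyan} \emph{also} assumes the measure is Mergelyan outright, so the fixed-point condition is not there to ``manufacture'' Mergelyan-ness; it is tied to the normalization in which $R$ fixes $0,1,\infty$, under which the kernel $\gamma_a(z)$ vanishes identically at $a=0,1$ and blows up as $a\to\infty$. It is what allows the support of the measure to be kept away from $\infty$ (so that $\phi(z)=\int\gamma_a(z)\,d\nu(a)$ is defined and integrable) and prevents the kernel from annihilating the measure: for instance $\nu=\delta_0$ is a non-zero $M$-measure with $\phi\equiv 0$. (The paper's own statement omits this hypothesis too, but your stated justification for dropping it is not the right one.) Second, what you call the ``core contradiction'' is asserted rather than proved, and your sketch of it is not the paper's mechanism: the Ma\~n\'e--Sad--Sullivan variation $F$ of Proposition \ref{prop.motivation} plays no role here. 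The paper instead (i) integrates Equation \eqref{eq.posrad} (resp.\ \eqref{eq.theo14}) against the invariant Beltrami differentials supplied by Lemma \ref{lm.struc.invariant} --- this is where structural stability actually enters --- to obtain convergence of the coefficients $A_v(\tilde c,\lambda_i)$ (Lemma \ref{lemma.31}); (ii) applies $I-\lambda_i R_*$ and passes to the limit to show that $\phi$ is an integrable fixed point of the Ruelle operator (Proposition \ref{prop.strucintegrab}); and (iii) invokes Corollary 12 of \cite{MakRuelle} and Lemma 3.16 of \cite{Mc1} to conclude that $R$ would be a flexible Latt\'es map, contradicting stability. Your sentences ``the cohomological equation forces this non-zero holomorphic datum to be invariant under the associated transfer operator'' and ``such a non-trivial invariant object cannot exist'' are precisely steps (ii) and (iii) stated without argument; as you yourself flag, that is the substance of the proof, and it is missing from your write-up.
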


 The following theorem   deals with the situation when the sequence of measures $\{\nu_{\lambda_i}\}$  converges to zero in the $*$-weak topology for a suitable sequence $\lambda_i \rightarrow 1$. However, as in the case discussed by Avila, Levin, and Makienko, we need additional restrictions on the individual postcritical set. We say that a critical point $c$ and  the individual postcritical set $P_c(R)$
are \textit{separated by the Fatou set} if there exists a Jordan curve in $ F(R)$ separating $c$ and $P_c(R)$.
\begin{theorem}\label{th.NorlundTop} Under the conditions of Theorem \ref{th.Norlundregular}.
Assume that $\lambda_i$ is a sequence of real numbers with $\lambda_i<1$ and
 $\lambda_i\rightarrow 1$ for which $\nu_{\lambda_i}$ is $*$-weakly convergent to $0$. 
 Then $R$ is unstable whenever $c$ and $P_c(R)$ are separated by the Fatou 
 set. 
\end{theorem}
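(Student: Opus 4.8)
The plan is to prove Theorem \ref{th.NorlundTop} by adapting the scheme underlying Theorem \ref{Abel} (Theorem B), since the two statements are parallel: both assert instability from the hypothesis that a distinguished family of measures converges $*$-weakly to $0$ together with the topological separation of $c$ and $P_c$ by the Fatou set. The only structural difference is that here the measures $\nu_{\lambda_i}$ are the Voronoi measures built from the N\"orlund averages $T_n = q_n x_0\delta_z + \cdots + q_0 x_n \delta_{R^n(z)}$ rather than the plain Abel measures $\nu_\lambda = (1-\lambda)\sum a_n \lambda^n \delta_{R^n(z)}$, where we take $z = R(c)$ and $x_n = \sigma_n(c)$. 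First I would set up the argument by contradiction: assume $R$ is structurally stable, and invoke the Ma\~n\'e--Sad--Sullivan holomorphic motion as in Proposition \ref{prop.motivation}, producing the continuous variation field $F$ on $\C$ satisfying the cohomological equation $F(R(z)) - R'(z)F(z) = 1$, hence by iteration $F(R^n(R(c))) = (R^n)'(R(c)) S_n$.

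The heart of the matter is to extract a contradiction from the interplay between the $*$-weak vanishing of $\nu_{\lambda_i}$ and the separation hypothesis. The topological separation of $c$ from $P_c$ by the Fatou set should let me construct, exactly as one does for Theorem \ref{Abel}, a test function (or a suitable Cauchy-transform-type function built from $F$ and the separating Fatou components) that is continuous on the support of the measures and whose pairing with $\nu_\lambda$ is controlled from below. The support of each Voronoi measure lies in the closure of the orbit $\{R^n(R(c))\}$, which sits in $P_c$; the separation guarantees there is a component of the Fatou set insulating $c$ from this closure, and this is what permits choosing the test function so that the integral $\int g\, d\nu_{\lambda_i}$ is comparable to the N\"orlund-averaged partial sums rather than vanishing. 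Concretely, I would pair $F$ (or an approximant of the function $\chi_E/(t-z)$ as in the $M$-measure lemma) against $\nu_{\lambda_i}$ and use the identity for $F$ to rewrite the pairing in terms of $\sum_n (\sum_{i} q_i S_{n-i})\lambda^n$-type quantities; the $*$-weak convergence $\nu_{\lambda_i}\to 0$ then forces these to vanish in the limit, contradicting the lower bound coming from the separation geometry.

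The main obstacle I anticipate is bookkeeping the convolution structure of $T_n$ when transporting it through the variation $F$. In the pure Abel case the measure is a single geometric series $\sum a_n \lambda^n \delta_{R^n(z)}$, so pairing with $F$ collapses neatly via $F(R^n(R(c))) = (R^n)'(R(c)) S_n = S_n/\sigma_n(c)$; but in the Voronoi case each $T_n$ is itself a $q$-weighted sum of Diracs at distinct orbit points $R^0(z),\dots,R^n(z)$, so $\int F\, dT_n = \sum_{i=0}^n q_{n-i} x_i\, F(R^i(z)) = \sum_{i=0}^n q_{n-i}\, (R^i)'(R(c)) x_i S_i$. I expect that the relation $x_i = \sigma_i(c) = 1/(R^i)'(R(c))$ causes a cancellation, reducing $\int F\, dT_n$ to $\sum_{i=0}^n q_{n-i} S_i$, after which Lemma \ref{lemma.Norlund}(3) guarantees $\sum_n (\int F\, dT_n)\lambda^n$ converges for $|\lambda|<1$ and can be analyzed; verifying this cancellation and that the resulting generating function does not vanish under the separation hypothesis is the delicate step. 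A secondary technical point is ensuring that $F$, which is only continuous (not holomorphic) off $J_R$, can legitimately be used as a test function against the $*$-weak limit; here I would lean on the $M$-measure machinery and the approximation by $F_E$ from the preceding lemma so that the pairing is justified and the separation by the Fatou set supplies the needed holomorphy on the relevant components.
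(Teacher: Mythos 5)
There is a genuine gap, and it is fatal to the approach: the quantity you propose to contradict is exactly the quantity that the hypothesis forces to vanish, so no contradiction can come out of it. Once you assume $R$ stable and pair the Ma\~n\'e--Sad--Sullivan variation field $F$ against $\nu_{\lambda_i}$, the $*$-weak convergence $\nu_{\lambda_i}\to 0$ \emph{already} gives $\int F\,d\nu_{\lambda_i}\to 0$, since $F$ is continuous and the supports lie in a fixed compact set; that is literally what $*$-weak convergence means. Your cancellation computation is correct as far as it goes ($\int F\,dT_n=\sum_{j=0}^n q_{n-j}S_j$, hence $\int F\,d\nu_{\lambda}=q(\lambda)x(\lambda)$ with $x(\lambda)=\sum_n \sigma_n(c)\lambda^n$), but it merely re-expresses this vanishing pairing; nothing opposes it. The ``lower bound coming from the separation geometry'' is never exhibited, and none can come from the cohomological equation $F(R(z))-R'(z)F(z)=1$: that identity only produces \emph{upper} bounds of the type $|S_n|\le C|\sigma_n(c)|$ (Proposition \ref{prop.motivation}). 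You have also misread how separation is used in Theorem \ref{Abel}: there it does not bound any pairing from below; it supplies a Jordan curve in $F(R)\setminus P(R)$ along which a Cauchy-integral argument is run. Finally, your worry about $F$ being merely continuous is a non-issue (continuous test functions are precisely what $*$-weak convergence handles), and the $M$-measure machinery is irrelevant to this point --- it concerns non-vanishing of Cauchy transforms and belongs to Theorems \ref{Mergelyan} and \ref{th.Norlundregular}, not to Theorem \ref{th.NorlundTop}.

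What the paper actually does is obtain its non-vanishing quantity from the Ruelle-operator functional equation, with structural stability entering through invariant Beltrami differentials rather than through the vector field $F$. Since $\{|\sigma_n(c)|\}$ is $N$-regular, $A_{v}(z,\lambda)$ extends through $q(\lambda)$ to a family $E_{v}(z,\lambda)$, meromorphic on the disk and holomorphic near $[0,1)$, satisfying Equation \eqref{eq.theo14}. Integrating \eqref{eq.theo14} against the differentials $\tilde\mu$ of Lemma \ref{lm.struc.invariant} (normalized so that $F_{\tilde\mu}(v_j)=\delta_{ij}$), invariance converts $\int\tilde\mu\,B_{v_j}(z,\lambda)\,|dz|^2$ into $\frac{1}{1-\lambda}F_{\tilde\mu}(v_j)$, and the $*$-weak vanishing of $\nu_{\lambda_i}$ kills the remaining term after multiplication by $(1-\lambda_i)$; this yields $\lim_i E_{v}(c,\lambda_i)=R''(c)\neq 0$ and $\lim_i E_{v}(\tilde c,\lambda_i)=0$ for critical points $\tilde c\neq c$. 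On the other hand, Theorem \ref{the.tec.principal} gives $E_{v}(\cdot,\lambda_i)\to 0$ uniformly on compact subsets of $F(R)\setminus P(R)$; the separation hypothesis provides a Jordan curve $g\subset F(R)\setminus P(R)$ enclosing $c$ and avoiding $P_c$, inside which $E_{v}(\cdot,\lambda_i)$ is holomorphic, so Cauchy's formula forces $\lim_i E_{v}(c,\lambda_i)=0$. The clash between these two limits is the contradiction. Your proposal contains no analogue of the first limit (the value $R''(c)$), and without the functional equation \eqref{eq.theo14} and Lemma \ref{lm.struc.invariant} there is no mechanism to produce one.
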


The separation condition is stronger than non-recurrence of critical points and is void for connected Julia sets. However, for disconnected Julia sets the two conditions are  closely related. For instance, if $J(R)$ is a Cantor set then 
every non-recurrent critical point in $J(R)$ satisfies the 
separation condition.  

Also, if the Fatou set $F(R)$ contains a completely invariant component and $J(R)$  is disconnected, then generically $J(R)$ contains uncountably many wandering single-point components. If a non-recurrent critical point $c$ is one of those wandering single-point components, then $c$ also satisfies the separation condition.

On the other hand, if a critical point $c$ belongs to a preperiodic (non-periodic) 
non single-point component, then $c$ satisfies the separation condition. 
This happens, for example, when $c$ belongs to  the boundary of 
a preperiodic but not periodic Fatou component.

Finally, let us note that, in general, if there is no completely invariant component 
but there is an infinitly connected Fatou component, then the set of buried 
points contains uncountably many single-point components. Then again, if a 
non-recurrent critical point $c$ is a buried single-point component of $J(R)$, then  also 
the separation condition holds. 

Nevertheless, the separation condition applies to a single critical point regardless of the behavior of other 
critical points.  

Let us note that Theorem \ref{th.NorlundTop} is complementary to Theorem \ref{th.Norlundregular} in the sense that if $\{x_n\}\in \ell_\infty$ then $\|\nu_\lambda\|=O(\|x_n\|_\infty)$ for real $\lambda$ and the assumption that all Voronoi measures are null implies that $\nu_\lambda\rightarrow 0$ for $\lambda\rightarrow 1$ in the $*$-weak topology.  However, it is possible that $\nu_\lambda$ converges to $0$ even when there are non-zero Voronoi measures, for instance consider a sequence $\{x_n\}\in \ell_1$.

Among other facts, in \cite{MakRuelle} it was established that if $c\in J(R)$ is non-recurrent and the series $\sum_{n=0}^\infty \sigma_n(c)$ is 
absolutely convergent, then $R$ is unstable. However, it is yet unclear whether 
a structurally stable map may  have a critical point $c\in J(R)$ with  $\sigma(c)\in 
\ell_\infty$. 

As a consequence of the previous theorem we answer this question positively by replacing the non-recurrent condition by a separation condition on the individual postcritical set $P_{c}$. Recall that a sequence $\{x_n\}\in \ell_\infty$ is \textit{Abel summable} to $L$ (maybe $\infty)$ whenever the limit 
$$\lim_{\lambda \rightarrow 1}\sum_{n= 0}^\infty \lambda^n x_n$$  for $ \lambda <1$ exists and is equal to $L.$ 
\begin{theorem}\label{bounded}
Let $c$ be a critical point in $J(R)$ with bounded $P_c(R)$  and spectrum $\sigma(c)\in \ell_\infty.$ Then:

\begin{enumerate}
\item The map $R$ is unstable whenever $P_c(R)$ has zero Lebesgue measure and $\sigma(c)$ is not Abel summable to $0.$
 \end{enumerate}
Even more, if $c$ and $P_{c}$ are separated by the Fatou set,  then each of the following conditions imply that $R$ is unstable.
\begin{enumerate}
 \item[2.]  The individual postcritical set $P_c(R)$ has measure $0$.
 \item[3.] The spectrum $\sigma(c)$ is convergent.
\end{enumerate}

\end{theorem}

Moreover, the condition that the Lebesgue measure of $P_c(R)$ is zero can be dropped in the case of  maps with a completely invariant Fatou domain.

\textbf{Acknowledgement.} The authors would like to thank M. Lyubich for useful discussions and to the referee for useful comments to a previous version of this paper. 

\section{Preliminary results.}

In this section we prove Proposition \ref{prop.motivation} and Lemma \ref{l.uniqueness} together with its corollaries. 

\begin{proof}[Proof of Proposition \ref{prop.motivation}.]

Given a stable map $R$ in the family $R_\lambda=R+\lambda$ for $|\lambda|<1$.  According to R. Ma\~n\'e, P. Sad, and D. Sullivan 
 (\cite{MSS}) there exists a holomorphic family $f_\lambda$ of quasiconformal automorphisms of $\C$ such that for  $\epsilon$ small enough and $\lambda$ with $|\lambda|<\epsilon$ we have 
 $$R_\lambda=f_\lambda\circ R\circ f_\lambda^{-1}.$$
 
 Let $F(z)=\displaystyle{\frac{\partial f_\lambda}{\partial \lambda}|_{\lambda=0}(z)}$ be the variation of $f_\lambda$.  Then $F$ is a 
 continuous function on $\C$. By a straightforward computation of the variation of $R_\lambda$
 we obtain $$F(R(z))-R'(z)F(z)=1.$$ Hence $F(R(c))=1$, and by induction $$F(R^n(R(c)))=(R^n)'(R(c))S_n.$$ We end up with $\left|S_n \right|\leq \sup_{z\in P_c(R)}|F(z)\sigma_n(c)|$. Since $F$ is bounded on  compact subsets of the plane, we are done.
 \end{proof}
 
 Corollary \ref{cor.motivation} is an immediate consequence of Proposition \ref{prop.motivation}. Now we prove Corollary \ref{cor.positiverat}.
 
\begin{proof}[Proof of Corollary \ref{cor.positiverat}]
  By  contradiction, assume that $R$ is structurally stable in $\{R_\lambda\}$. By assumption, the partial sums $S_n=\sum_{i=0}^n \sigma_i(c)$ satisfy 
  $S_n<S_{n+1}$ with $\limsup_n S_n>0$. Suppose that $\sigma(c)$ satisfies the condition (2) of the thrichotomy. Immediately we have $\sigma(c)\in \ell_1$ which contradicts Corollary \ref{cor.motivation}.
  Now suppose that $\sigma(c)$ satisfies the condition (3) of the thrichotomy.  Since $\liminf_n \sigma_n(c)\neq 0, \infty$, then $\displaystyle{\lim_{n\rightarrow \infty}} S_n=\infty$ which again contradicts Corollary \ref{cor.motivation}. \end{proof}

Now we briefly describe a large set of rational maps with real coefficients and non-negative derivative on the individual postcritical set $P_c(R)$ for a suitable critical point $c$.

Let $f$ be a real rational function which attains a finite minimum $m_0$ in the extended real line, then $f(z)-m_0$ is non negative with a zero in the extended real line. If $R(z)$ is a real primitive of $f(z)-m_0$  then $R(z)$ has a real critical point $c$  and $R'(z)=f(z)-m_0$ is non-negative on $P_c(R)$.
Note that  for $\gamma,\tau \in PSL(2,\mathbb{R})$, the map $\gamma \circ R \circ \tau$ also has the desired property.
To construct a real rational function $f$ as above fix the following data:

\begin{enumerate}
 \item A real polynomial of even degree $P(z)$, with positive leading coefficient. Here $P(z)$ may be a positive constant.
 
 \item A finite set $\{a_i\}$ of real points.
 
 \item A finite set  $\{b_i\}$ of positive real numbers.
 \item A finite set  $\{n_i\}$ of even integers numbers.
\item A finite set $\{k_i\}$ of integers greater than $1$.
\item A finite set $\{d_i\}$ of real numbers.
\item A finite set $\{w_i\}$ of non-real complex numbers. 
\end{enumerate}

Then the function $$f(z)=P(z)+\sum  \frac{b_i}{(z-a_i)^{n_i}} +\sum d_i \left(\frac{1}{(z-w_i)^{k_i}}+\frac{1}{(z-\overline{w}_i)^{k_i}}\right)$$ is the desired real rational function. The set of rational maps for which $R'\geq 0$ on $P_c(R)$ for a suitable critical point $c$ includes real maps with non-real critical points.
 
 \begin{proof}[Proof of Corollary \ref{Cor.AbelAver}]
By contradiction, if $R$ is a stable rational map in the family $\{R_\lambda\}$, then by assumption and Proposition 
\ref{prop.motivation} there exists a constant $C$ such that   $$\frac{|S_n|}{n}\leq \frac{C}{n}|\sigma_{n}(c)|.$$  
As the right hand side of the latter inequality converges to $0$,  the Ces\`aro averages of the spectrum converges  to $0$. This implies that the Abel averages of the spectrum converges to $0$, which is a contradiction. 
\end{proof}

 We finish the section with the proof of Lemma \ref{l.uniqueness}.
 
 \begin{proof}[Proof of Lemma  \ref{l.uniqueness}.]  Fix an arbitrary sequence of real numbers  $\lambda_i\rightarrow 1$ and $\lambda_i<1$. For each $i$ define the sequence $\alpha^i=\{\alpha_n^i\}$ given by $\alpha^i_n=\displaystyle\frac{\lambda_i^n x_n}{\sum_{n=0}^\infty |\lambda_i^n x_n|}$ so  $\alpha^i \in \ell_1.$ 
 If $b\in \ell_\infty$ is the sequence $\{b_n\}$ where $$b_n=
\begin{cases}
\frac{|x_n|}{x_n}, \, x_n\neq 0\\
0, \, x_n=0.
\end{cases},$$
 then $L_{\lambda_i}(b)=\sum_{n=0}^\infty \alpha^i_n b_n=1$.  So we identify each functional $L_{\lambda_i}$ with the sequence $\alpha^i.$ Thus every accumulation point of $L_{\lambda_i}$ in the $*$-weak topology as functionals on $\ell_\infty$ is a continuous non-zero functional.

 By assumption the sequence $L_{\lambda_i}$ converges to a non-zero functional in the $*$-weak topology on $\ell_\infty$. That is the same that the sequence $\{\alpha^i\}$ converges in the weak topology on $\ell_1$. Since $\ell_1$ is weakly complete and 
  the weak topology coincides with the strong topology, the sequence $\{\alpha^i\}$ converges 
 in norm to a non-zero element $\beta=\{\beta_n\}\in \ell_1$. In particular, we have 
  $$\lim_i\alpha_n^i=\beta_n.$$ Let  $k$ be such that $\beta_k\neq 0$ then
  $$\lim_{i\rightarrow \infty} \sum_{n=0}^\infty \lambda_i^n|x_n|=\frac{x_k}{\beta_k}.$$ Since  $\{\lambda_i\}$ is arbitrary, we conclude that  the limit $\displaystyle{\lim_{\lambda\rightarrow 1} \sum_{n=0}^\infty\lambda^n|x_n|}$ exists and is finite, therefore $\displaystyle{\sum_{n=0}^\infty |x_n|<\infty.}$
  
 \end{proof}

\section{Some background in dynamics and Poincar\'e series}

Most of the material in this section can be found in \cite{MakRuelle} (see also 
\cite{Makarxiv}). 

A rational map $R$ defines a complex push-forward map on $L_1(\C)$ 
with respect to the Lebesgue measure.  
This contracting endomorphism is called the \textit{complex Ruelle-Perron-Frobenius}, 
or the \textit{Ruelle operator} for short. The Ruelle operator is explicitly 
given by the formula
\[
R_*(\phi)(z)=\sum_{y\in R^{-1}(z)}\frac{\phi(y)}{R'(y)^2}=\sum_i 
\phi(\zeta_i(z))(\zeta'_i(z))^2, 
\] 
where $\zeta_i$ is any local complete  system of branches of the inverse of $R$. 
The \textit{Beltrami operator} 
$Bel:L_\infty(\C) \to L_\infty(\C)$ given by  
$$
Bel(\mu)=\mu(R)\frac{\overline{R'}}{R'}
$$ 
is dual to the Ruelle operator  acting on $L_1(\C)$. 
The fixed point space $Fix(B)$ of the Beltrami operator is called the 
\textit{space of invariant Beltrami differentials}.

Every element $\mu \in L_\infty(\C)$ defines a continuous function on $\C$ via 
$$
F_\mu(a)=a(a-1)\int_\C  \frac{\mu(z)}{z(z-1)(z-a)}|dz|^2, 
$$ 
which is called the \fd{normalized potential for $\mu$}. 
By convenience, we write 
$\gamma_a(z)=\frac{a(a-1)}{z(z-1)(z-a)}$ so  we get 
$F_\mu(a)=\int \gamma_a(z) \mu(z) |dz|^2$.

The following statement appears as Lemma 5 and Remark 6 in \cite{MakRuelle}. 

\begin{lemma}\label{lm.struc.invariant}
 Let $R$ be a structurally stable rational map. 
Then for every critical value $v_i$ there exists an invariant 
Beltrami differential $\mu_i$ such that
 $F_{\mu_i}(v_j)=\delta_{ij}$, the Kroenecker delta  function.
 \end{lemma}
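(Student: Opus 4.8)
The plan is to read $F_{\mu}(v_j)$ as the infinitesimal velocity of the $j$-th critical value under the deformation generated by $\mu$, thereby reducing the lemma to the surjectivity of a single linear map, and then to extract the $\mu_i$ by linear algebra. First I would set up the Ahlfors--Bers variational picture. For $\mu\in Fix(B)$, invariance of $\mu$ under $Bel$ guarantees that for small $t$ the normalized solution $f_t$ of the Beltrami equation with coefficient $t\mu$ conjugates $R$ to a genuine rational map $R_t=f_t\circ R\circ f_t^{-1}\in Rat_d$. The critical values of $R_t$ are exactly the $f_t(v_j)$, and the classical variational formula identifies the motion of a point with the normalized potential, $\frac{d}{dt}\big|_{t=0}f_t(a)=\kappa\,F_{\mu}(a)$ for a universal nonzero constant $\kappa$. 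Differentiating the conjugacy relation and evaluating at a critical point $c_j$, where $R'(c_j)=0$, yields $F_{\mu}(v_j)=\kappa^{-1}\dot R_{\mu}(c_j)$, with $\dot R_{\mu}=\frac{d}{dt}\big|_{t=0}R_t$ the (rational) infinitesimal deformation of $R$. Thus the lemma is equivalent to the surjectivity onto $\C^{k}$ of the linear map $\Phi\colon Fix(B)\to\C^{k}$, $\mu\mapsto(F_{\mu}(v_1),\dots,F_{\mu}(v_k))$, where $v_1,\dots,v_k$ are the distinct finite critical values: once $\Phi$ is onto, solving $\Phi(\mu_i)=e_i$ delivers the desired differentials.

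Next I would bring in structural stability via Ma\~n\'e--Sad--Sullivan. Stability means every nearby $Q\in Rat_d$ is conjugate to $R$ by a quasiconformal map depending holomorphically on $Q$ and having $R$-invariant Beltrami coefficient; infinitesimally, the deformation map $Fix(B)\to T_R Rat_d/\mathrm{Cob}$ onto the tangent space of moduli is surjective, where $\mathrm{Cob}$ is the at most $3$-dimensional space of M\"obius coboundaries $v\mapsto v\circ R-R'v$. Independently, writing a tangent vector $\dot R\in T_R Rat_d$ with numerator a free polynomial of degree at most $2d$, the evaluation $\dot R\mapsto(\dot R(c_1),\dots,\dot R(c_k))$ is onto $\C^{k}$ by Lagrange interpolation at the distinct critical points (note $k\le 2d-2$, and the $c_j$ are not poles since the $v_j$ are finite). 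Combining the two facts, the image of $\Phi$ together with the evaluation of $\mathrm{Cob}$ spans $\C^{k}$; since the latter is at most $3$-dimensional, this already shows that $\mathrm{Im}(\Phi)$ has codimension at most three in $\C^{k}$.

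The remaining and decisive step is to remove this codimension, i.e. to prove that the functionals $\mu\mapsto F_{\mu}(v_j)$ are linearly independent on $Fix(B)$. Suppose not: then $\sum_j a_j F_{\mu}(v_j)=0$ for all $\mu\in Fix(B)$ with $(a_j)\neq 0$. Writing this as $\int\Gamma\,\mu\,|dz|^2=0$ with $\Gamma=\sum_j a_j\gamma_{v_j}$, a rational function whose simple poles are exactly $0,1$ and the $v_j$ with residue $a_j$ at $v_j$, the relation says that $\Gamma$ annihilates $Fix(B)=\ker(I-Bel)$. Since $Bel$ is dual to the Ruelle operator $R_*$ on $L_1(\C)$, this forces $\Gamma$ into the $L_1$-closure of the range of $I-R_*$. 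I would then contradict this by the Cauchy-transform mechanism of the $M$-measure lemma: the atomic measure $\sum_j a_j\delta_{v_j}$ has nonvanishing Cauchy transform off its support, whereas functions in $\overline{\mathrm{Im}(I-R_*)}$ cannot reproduce such residues, the necessary room being supplied by the Fatou dynamics guaranteed by structural stability. I expect this non-vanishing to be the main obstacle: the variational identity and the interpolation count are formal, but genuinely separating the critical values requires exploiting the range structure of $R_*$ --- equivalently, constructing invariant Beltrami differentials on the Fatou components around the critical values that detect them one at a time.
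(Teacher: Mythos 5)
Your first half is sound, and it is in fact the route taken by the source the paper defers to (the paper gives no proof of this lemma, citing Lemmas 5 and 6 of \cite{MakRuelle}): the variational identity $\kappa F_\mu(v_j)=\dot R_\mu(c_j)$ at a critical point, the reduction to surjectivity of $\Phi\colon Fix(B)\to\C^k$, and the Lagrange-interpolation fact that $\dot R\mapsto(\dot R(c_1),\dots,\dot R(c_k))$ is onto $\C^k$ are all correct (granting the standard caveat that the infinitesimal surjectivity you invoke in your third paragraph does not follow from mere openness of the image of $\mu\mapsto R_\mu$ --- compare $z\mapsto z^2$ --- but from differentiating the holomorphically varying MSS conjugacies along curves). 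The argument does not close, however. After quotienting by the M\"obius coboundaries you only get $\mathrm{codim}\,\mathrm{Im}(\Phi)\leq 3$, and the ``decisive step'' you then need --- linear independence of the functionals $\mu\mapsto F_\mu(v_j)$ on $Fix(B)$ --- is \emph{exactly equivalent} to the surjectivity of $\Phi$ you set out to prove, so the reduction gains nothing. Worse, the mechanism you propose for it is circular: the duality step is fine ($\Gamma=\sum_j a_j\gamma_{v_j}$ annihilating $\ker(I-Bel)$ forces $\Gamma\in\overline{\mathrm{Im}(I-R_*)}$ in $L_1$, by the bipolar theorem), but the assertion that elements of $\overline{\mathrm{Im}(I-R_*)}$ ``cannot reproduce such residues'' is, again by Hahn--Banach, precisely the statement that some invariant $\mu$ satisfies $\sum_j a_jF_\mu(v_j)\neq 0$, i.e.\ the lemma itself. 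No independent argument is offered (and the paper's $M$-measure lemma does not apply: it concerns measures supported on boundaries of complementary components, not point masses at the $v_j$). You acknowledge this is the main obstacle; it is, and as written it is a genuine gap.

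The gap can be closed by removing the quotient that created it, and this is how the cited proof proceeds. Work from the start in the slice of maps fixing $0,1,\infty$ (as the paper does in Lemma \ref{lem.ratcrit}). For a holomorphic curve $R_t$ in this slice, the MSS conjugacies $f_t$ can themselves be normalized to fix $0,1,\infty$; then the vector field $F=\partial_t f_t|_{t=0}$ has $\bar\partial F=\mu\in Fix(B)$ and vanishes at $0,1,\infty$, so $F$ \emph{is} (a constant multiple of) the normalized potential $F_\mu$ --- the holomorphic ambiguity is a quadratic polynomial vector field vanishing at three points, hence zero --- and the conjugacy equation gives $\dot R(c_j)=\kappa F_\mu(v_j)$ with no coboundary term at all. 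Surjectivity then follows from interpolation at the $2d$ distinct points $0,1,c_1,\dots,c_{2d-2}$ (the $c_j$ avoid $0,1$ because a stable map has no superattracting cycles): prescribe the value $0$ at $0$ and $1$ and $\delta_{ij}Q(c_j)^2$ at $c_j$, which a numerator of degree at most $2d-1$ always achieves, and such numerators exactly parametrize the tangent space of the normalized slice. In short, the three normalization conditions should be absorbed into the interpolation data rather than lost to a quotient by $\mathrm{Cob}$; once you do that, your own steps 1, 2 and 4 complete the proof and no separation argument via the range of $I-R_*$ is needed.
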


Now we present the formal relation of the Poincar\'e-Ruelle series. 
 
\begin{definition} The \textit{Poincar\'e-Ruelle} series are
\begin{itemize}
 \item 
$$
B_a(z)=\sum_{n=0}^\infty (R_*)^n(\gamma_a(z)),
$$ 
\item  
$$
A_a(z)=\sum_{n=0}^\infty \frac{1}{(R^n)'(a)}\gamma_{R^n(a)}(z).
$$
\end{itemize}
\end{definition}

The lemma below gives a formal relation between both Poincar\'e-Ruelle series. 
The proof can be found in Proposition 7 of  \cite{MakRuelle} and further details are contained in 
\cite{Makarxiv}.
 
\begin{lemma}\label{lem.ratcrit} Let $R$ be a rational map with simple critical points 
$c_i$  fixing $0,1$, and $\infty$.  Set $v_i=R(c_i)$ and let $a$ be a value such that 
$\bigcup_n\{R^n(a)\}$ does not contains critical points. Then we have the following 
formal relation between the above series 
 \begin{equation}\label{eq.Poincare}
 B_a(z)=A_a(z)+\sum_{i} \frac{1} {R''(c_i)}A_a(c_i)\otimes B_{v_i}(z),\tag{*}
 \end{equation}
 here $\otimes$ is the formal Cauchy product. 

\end{lemma}

 Hence, we have (see again Proposition 7 in \cite{MakRuelle}) \begin{align}(R_*)^n(\gamma_{a}(z))&=\frac{1}{(R^n)'(a)}\gamma_{R^n(a)}(z)+ \sum_i \frac{1} {R''(c_i)} \left[ \frac{\gamma_{R^{n-1}(a)}(c_i)}{(R^{n-1})'(a)}
                                \gamma_{v_i}(z)+ \right.\\
                                &\left. + \frac{\gamma_{R^{n-2}(a)}(c_i)}{(R^{n-2})'(a)} R_*(\gamma_{v_i}(z))+\cdots
+\gamma_a(c_i)(R_*)^{n-1}(\gamma_{v_i}(z)) \right].\end{align}

To the formal series $A_a(z)$ and $B_a(z)$ involved in Equation 
\eqref{eq.Poincare} we associate a formal Abel series parameterized by the unit disk as follows. For $|\lambda|<1$, write
$$A_a(z,\lambda)=\sum_{n=0}^\infty \frac{\lambda^n}{(R^n)'(a)}\gamma_{R^n(a)}(z)$$ and
$$B_a(z,\lambda)=\sum_{n=0}^\infty \lambda^n(R_*)^n(\gamma_{a}(z)).$$

We have the following lemma.

\begin{lemma}\label{lem.convergence}
Let $R$ be a structurally stable rational map, $c\in J(R)$ be a critical point with bounded $P_c(R)$ and $v=R(c)$. Assume  the
spectrum $\sigma(c)$ has  radius of convergence $r>0$,  then  for any complex number $\lambda$ with $|\lambda|<r$ we have the following. 
\begin{enumerate}
 \item The series $A_v(z,\lambda)$ is absolutely 
 convergent almost everywhere with respect to $z$ and is an integrable function holomorphic off $P_c(R)\cup\{0,1,\infty\}.$
 \item Let $\tilde{c}$ be a critical point. The numerical series $A_v(\tilde{c},\lambda)$ 
 is absolutely convergent.
 \item  The series $B_a(z,\lambda)$ is absolutely 
 convergent almost everywhere and is an integrable function for every $|\lambda|<1$ and every $a\in \C$.
\end{enumerate}
 Furthermore, each of the series above defines a holomorphic function with respect to 
 $\lambda$ for $|\lambda|<r.$
 \end{lemma}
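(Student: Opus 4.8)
The plan is to establish the three convergence statements and the holomorphicity in $\lambda$ by controlling the growth of the terms using the radius-of-convergence hypothesis together with estimates on the kernel $\gamma_a(z)$. I would organize the argument around two basic facts: first, that $\sigma(c)$ has radius of convergence $r>0$, so $\sum_n \sigma_n(c)\lambda^n$ converges absolutely for $|\lambda|<r$; second, that the function $\gamma_{R^n(a)}(z)$ has an $L^1(\C)$-norm that is uniformly bounded over $n$ (since $P_c$ is bounded and the singularities of $\gamma$ at $0,1,R^n(a)$ are integrable of order $|z|^{-1}$, and the normalizing factor $R^n(a)(R^n(a)-1)$ is controlled because the orbit stays in the compact set $P_c$). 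These two facts combine to give the absolute convergence in statements (1) and (2).

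\medskip

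For part (1), I would write $A_v(z,\lambda)=\sum_n \lambda^n \sigma_n(c)\,\gamma_{R^n(c)}(z)$ and estimate $\int_\C |A_v(z,\lambda)|\,|dz|^2 \leq \sum_n |\lambda|^n |\sigma_n(c)| \,\|\gamma_{R^n(c)}\|_{L^1}$. Since the $L^1$-norms are uniformly bounded (say by $M$) by the boundedness of $P_c$, this majorizes by $M\sum_n |\lambda|^n |\sigma_n(c)|$, which converges for $|\lambda|<r$. Absolute convergence of the integral yields absolute convergence of the series for almost every $z$ by Tonelli, and integrability of the limit. Holomorphy off $P_c$ follows because each $\gamma_{R^n(c)}(z)$ is holomorphic in $z$ away from its poles $0,1,R^n(c)\in P_c\cup\{0,1\}$, and the series converges locally uniformly on compact subsets of the complement. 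For part (2), I would evaluate the same series at a fixed point $\tilde c$; here $\gamma_{R^n(c)}(\tilde c)$ is a \emph{numerical} sequence bounded uniformly in $n$ (again because $R^n(c)$ stays in the compact set $P_c$ and $\tilde c$ is a fixed point off the orbit, so the denominators are bounded away from zero), so the numerical series is dominated by a constant times $\sum_n |\lambda|^n|\sigma_n(c)|$ and converges absolutely.

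\medskip

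For part (3), the key is that $B_a(z,\lambda)=\sum_n \lambda^n (R_*)^n(\gamma_a(z))$, and the Ruelle operator $R_*$ is a contraction on $L_1(\C)$, so $\|(R_*)^n(\gamma_a)\|_{L^1}\leq \|\gamma_a\|_{L^1}$ for all $n$. Therefore $\int_\C |B_a(z,\lambda)|\,|dz|^2 \leq \|\gamma_a\|_{L^1}\sum_n |\lambda|^n$, which converges for every $|\lambda|<1$ regardless of $r$; the almost-everywhere absolute convergence and integrability then follow as before from Tonelli. Finally, for the holomorphy in $\lambda$ of all three series, I would note that each is a power series in $\lambda$ with coefficients in $L^1(\C)$ (or in $\C$, for the numerical series of part (2)) whose $L^1$-norms grow at most like $|\sigma_n(c)|$ or are uniformly bounded; the radius-of-convergence estimate shows these power series converge in the Banach space $L^1(\C)$ for $|\lambda|<r$ (respectively $|\lambda|<1$), and a power series with Banach-space coefficients defines a holomorphic Banach-valued function on its disk of convergence, which gives pointwise holomorphy after evaluation.

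\medskip

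The main obstacle I anticipate is justifying the uniform $L^1$-bound on $\gamma_{R^n(c)}$ and the uniform numerical bound on $\gamma_{R^n(c)}(\tilde c)$: these depend on the orbit $R^n(c)$ remaining in a fixed compact set (supplied by the boundedness of $P_c$) so that the normalizing factor $a(a-1)$ with $a=R^n(c)$ stays bounded and the pole at $z=a$ does not escape to infinity or collide with $0$ or $1$ in a way that blows up the norm. Care is needed because the poles of $\gamma_{R^n(c)}$ can approach $0$ or $1$ if the orbit does, and one must check the $L^1$-integrability stays uniformly controlled; this is a standard but slightly delicate estimate on the Cauchy-type kernel that I would verify using that two nearby simple poles of order $|z|^{-1}$ still give a uniformly bounded $L^1$-norm on any bounded region.
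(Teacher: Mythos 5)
Your parts (1) and (3) are essentially the paper's argument: dominate $\int_\C |A_v(z,\lambda)|\,|dz|^2$ by $\sum_n|\lambda|^n|\sigma_n(c)|\,\|\gamma_{R^n(v)}\|_{L^1}$, and use that $R_*$ is an $L^1$-contraction for $B_a$ (one caveat: the lemma does not assume $P_c$ bounded, so instead of a uniform bound on $\|\gamma_{R^n(v)}\|_{L^1}$ the paper keeps the standard estimate $\int_\C|\gamma_a(z)|\,|dz|^2\le C|a\ln|a||$ and carries the factor $|R^n(v)\ln|R^n(v)||$ along in the dominating series). The genuine gap is in part (2). You claim that $\gamma_{R^n(v)}(\tilde c)$ is bounded uniformly in $n$ because ``$\tilde c$ is a fixed point off the orbit, so the denominators are bounded away from zero.'' This is unjustified and false in general: a critical point $\tilde c$ may lie in $P_c$, and the orbit $R^n(v)$ may come arbitrarily close to $\tilde c$ --- for instance $\tilde c=c$ itself when $c$ is recurrent, which is precisely the interesting situation for a critical point in the Julia set. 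Since $\gamma_{R^n(v)}(\tilde c)$ contains the factor $(\tilde c-R^n(v))^{-1}$, your uniform bound fails exactly along the subsequence where $R^{n}(v)\to\tilde c$, and domination by a constant times $\sum_n|\lambda|^n|\sigma_n(c)|$ breaks down. You even flag this as your ``main obstacle,'' but the resolution you sketch (two nearby poles still have bounded $L^1$-norm) addresses part (1), not the pointwise evaluation in part (2).

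The paper's proof of (2) is designed around this case. If $\tilde c\notin P_c$ one is done by part (1); if $\tilde c\in P_c$, one uses that $\tilde c$ is a \emph{critical} point: near $\tilde c$ one has $R'(z)=(z-\tilde c)R''(\tilde c)+O(|z-\tilde c|^2)$, hence $|R^{n_i}(v)-\tilde c|^{-1}\le M\,|R'(R^{n_i}(v))|^{-1}$ and therefore $|\gamma_{R^{n_i}(v)}(\tilde c)|\le M_1\,|R'(R^{n_i}(v))|^{-1}$ for the indices $n_i$ with $R^{n_i}(v)$ in a small neighborhood of $\tilde c$. The smallness of the denominator is then absorbed by the chain rule, $\sigma_{n_i}(c)/R'(R^{n_i}(v))=\sigma_{n_i+1}(c)$, so the dangerous terms are dominated by $\frac{M_1}{|\lambda|}\sum_i|\lambda^{n_i+1}\sigma_{n_i+1}(c)|$, which converges by the radius-of-convergence hypothesis. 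This index-shifting trick, special to critical points, is the missing idea; without it part (2) --- which is the part of the lemma actually needed to make sense of the coefficients $A_v(c_i,\lambda)$ in Equation \eqref{eq.posrad} --- does not follow from your estimates.
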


 \begin{proof}
 
 By assumption the series $\sum \lambda^n \sigma_n(c)$ is absolutely convergent 
 for $|\lambda|<r$ and defines a holomorphic function with respect to $\lambda$ in the disk $|\lambda|<r.$
 
 Part (1). As there exists a constant $C$ such that $\int |\gamma_a(z)||dz|^2\leq C |a\ln|a||$ (see for example the books by 
 Gardiner-Laki\v{c} \cite{GardLakic} and Krushkal  \cite{KrushkalQCRiem}) we get
 \begin{align}\int_\C |A_v(z,\lambda)||dz|^2\leq \sum 
 |\lambda^n \sigma_n(c)|\int_\C\left|\gamma_{R^n(v)}(z)\right| |dz|^2 \\
 \leq C\sum |\lambda^n\sigma_n(c)R^n(v)\ln|R^n(v)||.
\end{align}
Since $P_c(R)$ is bounded for every $|\lambda|<r$, the last expression is absolutely convergent. Hence $A_\nu(z,\lambda)$ converges in the $L_1$ norm and is a holomorphic integrable function outside   $P_c(R)\cup 
\{0,1,\infty\}.$  By the mean value theorem 
the partial sums of the series $A_v(z,\lambda)$ are uniformly bounded on compact sets outside  $P_c(R)\cup 
\{0,1,\infty\}.$ Hence the series $A_\nu(z,\lambda)$ converges uniformly on compact sets off  $P_c(R)\cup 
\{0,1,\infty\}.$ 

Part (2). If $\tilde{c}\notin P_c(R)$ then by Part (1) we are done. Otherwise, we have 
$\tilde{c}\in P_c(R)$. Given $\epsilon>0$, let $U_\epsilon$ be the $\epsilon$ 
neighborhood of $\tilde{c}$. Let $n_i$ be such that $R^{n_i}(v)\in U_\epsilon.$ 
As in the arguments in Part(1), it is enough to estimate the expression $$\left|
\sum_i  \lambda^{n_i}\sigma_{n_i}(c)\gamma_{R^{n_i}(v)}(\tilde{c})\right|.$$ 
Note that for $z\in 
U_\epsilon$, we have
$R'(z)=(z-\tilde{c})R''(\tilde{c})+O(|z-\tilde{c}|^2).$ Thus, we get
$$\left|\frac{1}{R^{n_i}(v)-\tilde{c}}\right|\leq \left|\frac{R''(\tilde{c})+O(|R^{n_i}(v)-\tilde{c}|)}{R'(R^{n_i}(v))}\right|\leq M  \left|\frac{1}{R'(R^{n_i}(v))}\right|$$  and $$\left|\gamma_{R^{n_i}(v)}(\tilde{c})\right|\leq 
M\left|\frac{1}{R'(R^{n_i}(v))}\cdot \frac{R^{n_i}(v)(R^{n_i}(v)-1)}{\tilde{c}
(\tilde{c}-1)}\right|\leq M_1\left|\frac{1}{R'(R^{n_i}(v))}\right|,$$ where $M$ and $M_1$ are suitable constants  depending on $\epsilon$ and $\tilde{c}$. As a result of the previous computation we obtain

\begin{align}\left|\sum_i  \lambda^{n_i}{\sigma_{n_i}(c)}\gamma_{R^{n_i}(v)}(\tilde{c})\right|&\leq M_1\sum_i  \left| \frac{\lambda^{n_i}{\sigma_{n_i}(c)}}{R'(R^{n_i}(v))}\right| \\
&\leq \frac{1}{|\lambda|}M_1\sum_i |\lambda^{n_i+1}
\sigma_{n_i+1}(c)|<\infty,
\end{align} for $0<|\lambda|<r.$

Part (3). Since $\|R_*(f)\|_{L_1}\leq \|f\|_{L_1}$ holds for any $f\in L_1(\C)$, then for
every $|\lambda|<1$, the series $B_a(z,\lambda)$ is an integrable function 
and so converges absolutely almost everywhere for $a\in \C$.\end{proof}

 We have the following immediate consequence. 
 
 \begin{corollary}\label{cor.posrad} Let $c$ be a critical point of a rational map $R$ with bounded $P_c(R).$
  If the spectrum $\sigma(c)$ of a critical point $c$ has radius of convergence $r>0$ 
  then we can rewrite Equation \eqref{eq.Poincare}  as 
  \begin{equation}\label{eq.posrad} 
  B_{R(c)}(z,\lambda)=A_{R(c)}(z,\lambda)+\lambda \sum_i \frac{1}{R''(c_i)}  
  A_{R(c)}(c_i,\lambda)\cdot B_{v_i}
 (z,\lambda),\tag{**}
  \end{equation}
  for every $|\lambda|<r$ and almost every $z\in \C.$ 
 \end{corollary}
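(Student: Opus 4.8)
The plan is to deduce the parameterized relation \eqref{eq.posrad} from the formal identity \eqref{eq.Poincare} by inserting the Abel parameter $\lambda$ and checking that each formal manipulation is legitimate analytically for $|\lambda|<r$. The point is that \eqref{eq.Poincare} holds only as an identity of formal power series (in the formal Cauchy-product sense), whereas the corollary asserts a genuine pointwise-almost-everywhere identity of integrable functions for each fixed $\lambda$ in the disk of radius $r$. So the whole content is a convergence bookkeeping argument, and Lemma \ref{lem.convergence} has been engineered precisely to supply every convergence fact we need.

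First I would record what the formal relation \eqref{eq.Poincare} says at the level of the $\lambda$-series. Recall $B_a(z,\lambda)=\sum_n \lambda^n (R_*)^n(\gamma_a(z))$ and $A_a(z,\lambda)=\sum_n \lambda^n \sigma_n(a)\,\gamma_{R^n(a)}(z)$ with $a=R(c)=v$. The formal Cauchy product $A_{v}(\cdot)\otimes B_{v_i}(\cdot)$ in \eqref{eq.Poincare} pairs the $k$-th coefficient of the $A$-series with the $(n-k)$-th coefficient of the $B$-series; after attaching $\lambda^n$ to the $n$-th total coefficient, this product becomes the ordinary product of power series $A_{v}(c_i,\lambda)\cdot B_{v_i}(z,\lambda)$, and the explicit expansion of $(R_*)^n(\gamma_v(z))$ displayed in Lemma \ref{lem.ratcrit} shows that the coefficient of $\lambda^n$ on the right matches term by term. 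The single factor of $\lambda$ out front in \eqref{eq.posrad} accounts for the index shift between the two spectra ($\sigma_{n}(c)$ on the left versus the convention $\sigma_n(R(c))$ built into $A_{R(c)}$), i.e.\ it is the bookkeeping that aligns $A_{R(c)}(c_i,\lambda)\cdot B_{v_i}(z,\lambda)$ as the Cauchy product generating the iterated sum in the display of Lemma \ref{lem.ratcrit}.

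Next I would justify that the formal equality upgrades to an honest functional equality. By Lemma \ref{lem.convergence}(3), $B_v(z,\lambda)$ and each $B_{v_i}(z,\lambda)$ converge absolutely almost everywhere and define integrable functions for all $|\lambda|<1$; by part (1), $A_{v}(z,\lambda)$ is absolutely convergent a.e.\ and integrable, holomorphic off $P_c$, for $|\lambda|<r$; and by part (2) the numerical series $A_{v}(c_i,\lambda)$ converges absolutely for $|\lambda|<r$, so each scalar coefficient $\frac{1}{R''(c_i)}A_{v}(c_i,\lambda)$ is a well-defined finite number. With all three constituent series absolutely convergent, the Cauchy product of the power series $A_{v}(c_i,\lambda)$ and $B_{v_i}(z,\lambda)$ converges to the product of their sums for a.e.\ $z$, which is exactly what lets the formal $\otimes$ be read as an ordinary product. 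Matching coefficients of $\lambda^n$ on both sides of \eqref{eq.Poincare} and then resumming (absolute convergence allows rearrangement) yields \eqref{eq.posrad} as an a.e.\ pointwise identity for each fixed $\lambda$ with $|\lambda|<r$. The final sentence of Lemma \ref{lem.convergence} guarantees that both sides are moreover holomorphic in $\lambda$ on $|\lambda|<r$, so the identity is consistent as an analytic family.

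The main obstacle I anticipate is not any single estimate but the careful alignment of the two summation conventions so that the lone prefactor $\lambda$ in \eqref{eq.posrad} is correctly placed; a sign or index-shift error here would be the easy mistake. Concretely, one must verify that the $n$-th coefficient in the iterated expression of Lemma \ref{lem.ratcrit} is reproduced by reading off the $\lambda^n$ coefficient of $\lambda\,A_{R(c)}(c_i,\lambda)\,B_{v_i}(z,\lambda)$, i.e.\ that the convolution $\sum_{k}\frac{\gamma_{R^{k-1}(v)}(c_i)}{(R^{k-1})'(v)}(R_*)^{n-k}(\gamma_{v_i}(z))$ is generated with the single shift encoded by the external $\lambda$. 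Since everything in sight is absolutely convergent in the stated range by Lemma \ref{lem.convergence}, once the coefficient matching is done correctly there is no analytic difficulty left: the rearrangement of a doubly-indexed absolutely summable family is automatic, and the result follows.
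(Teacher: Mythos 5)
Your proposal is correct and is essentially the paper's own argument written out in full: the paper's proof is the single sentence ``This comes from the previous lemma and the Cauchy product theorem,'' and you do exactly that, invoking Lemma \ref{lem.convergence} for the absolute convergence of $A_{v}(z,\lambda)$, $A_{v}(c_i,\lambda)$ and $B_{v_i}(z,\lambda)$, and the Cauchy product theorem to turn the formal $\otimes$ into an ordinary product, with the external factor $\lambda$ correctly identified (in your final paragraph) as the degree-one shift in the convolution displayed after Lemma \ref{lem.ratcrit}.
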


 \begin{proof}
This comes from the previous lemma  and the Cauchy product theorem.\end{proof}

 \section{Proofs of the main theorems}
 
 Now we are ready to proof Theorem \ref{th.Lyapunovnon}, Theorem \ref{th.Norlundregular},  Theorem \ref{th.NorlundTop} and Theorem \ref{bounded}. 
 
 \subsection{The Abel case}
 
 Our plan is first to prove these theorems in the case where 
a spectrum has radius of convergence at least $1$ and taking the identity matrix as N\"orlund matrix, this is what we call the
 \textit{Abel case}. We will consider general matrices in the next subsection. Accordingly, a Voronoi measure associated  to the identity matrix, a regular sequence $\{x_n\}$ and a sequence of points $\{z_n\}$ will be called an \textit{Abel measure}. Again, as in the Voronoi case, we say that an Abel measure is associated to a point $a$ whenever $\sigma(a)=\{x_n\}$ and $z_n=R^n(a).$ In this situation, the measure $\nu_\lambda$ has the following simple form $$\nu_\lambda=(1-\lambda)\sum_{n=0}^\infty \sigma_n(a)\lambda^n\delta_{R^n(a)}.$$
 
 \begin{lemma}\label{lemma.31} Let $c$ be a critical point with bounded $P_c(R)$ and such that the 
 $\sigma(c)$  has radius of convergence at least  $1$.
 Let $\nu_0$ be a finite complex valued measure. Let  $\{\lambda_i\}$ and $\{r_i\}$ be sequences of complex numbers, with $|\lambda_i|<1$, so that $r_i \nu_{\lambda_i}$ converges $*$-weakly to
 $\nu_0$.  If $R$ is structurally stable then for any critical point $\tilde{c}\neq c$ the sequence $\{r_iA_{R(c)}(\tilde{c},
 \lambda_i)\}$ is bounded and convergent. Finally, for the critical point $c$ the sequence $\{r_i(1-\frac{1}{R''(c)} A_{R(c)}(c,\lambda_i))\}$ is bounded and convergent.
 \end{lemma}

 \begin{proof}
Let $\{c_j\}_{j=1}^{2 deg(R)-2}$ be the set of critical points with $c_1=c$ and $v_j=R(c_j)$ their respective critical values. For a critical value $w$ take an invariant Beltrami differential $\mu_w$ as in Lemma \ref{lm.struc.invariant}.

As $\mu_w$ is invariant, we get
$$\int_\C \mu_w(z)B_{v_j}(z,\lambda)|dz|^2=\frac{1}{1-\lambda}F_{\mu_w}(v_j)=\begin{cases}
\frac{1}{ 1-\lambda},\, w=v_j \\
0  \, \textnormal{ otherwise.}\end{cases}$$

Integrating Equation \eqref{eq.posrad} in Corollary \ref{cor.posrad} with respect to
$\mu_w$ yields
\begin{align*}\int_\C\mu_w(z) B_{v_1}(z,\lambda)|dz|^2&=\int_\C\mu_w(z) A_{v_1}(z,\lambda)|dz|^2+\\ &\hskip.21cm+\lambda \sum_j \frac{1}{R''(c_j)}  
  A_{v_1}(c_j,\lambda)\cdot \int_\C \mu_w(z)B_{v_j} 
 (z,\lambda)|dz|^2.
 \end{align*}

After multiplying on both sides of the latter equation by $r_i(1-\lambda_i)$ and taking $\lambda=\lambda_i$, we obtain

\begin{equation}\label{eq.lema3.1} r_i(1-\lambda_i)\int \mu_w(z) A_{v_1}(z,\lambda_i)|dz|^2= r_i\left[F_{\mu_w}(v_1)-\frac{\lambda_i}{R''(c)}A_{v_1}
(c,\lambda_i)\right].\end{equation}

On the other hand as we have $(1-\lambda_i)A_{v_1}(z,\lambda_i)=\int \gamma_a(z) d\nu_{\lambda_i}(a),$ 
then applying Fubini's theorem we get 
$$r_i(1-\lambda_i)\int \mu_w(z) A_{v_1}(z,\lambda_i)|dz|^2=r_i \int d\nu_{\lambda_i}(a)\int 
\gamma_a(z)\mu_w(z)|dz|^2$$
$$=\int 
F_{\mu_w}(a)r_id\nu_{\lambda_i}(a).$$ Since $F_{\mu_w}$ is continuous on $\C$ and $P_c(R)$ is bounded, we have   $$\lim_{i\rightarrow \infty}\int 
F_{\mu_w}(a)r_id\nu_{\lambda_i}(a)=\int F_{\mu_w}
(a)d\nu_0(a),$$ and then returning to (\ref{eq.lema3.1}), by the choice of $\mu_w$ we conclude the proof. \end{proof}

\begin{proposition}\label{prop.strucintegrab}
Let $R$ be structurally stable and
$\nu_0$ be as in Lemma \ref{lemma.31}. Then $\phi(z)=\int \gamma_a(z)d\nu_0(a)$ is a well-defined integrable function that satisfies $R_*(\phi(z))=\phi(z).$
\end{proposition}
 
\begin{proof} First, again since $P_c(R)$ is bounded and using Fubini's theorem we get
\begin{align*}\int |\phi(z)||dz|^2&\leq  \int |d\nu_0(a)|\int |\gamma_a(z)||dz|^2 \\
&\leq M\int |a|	|\ln |a|||d\nu_0(a)|<\infty.\end{align*} 
Therefore $\phi$ is integrable.
Applying $r_i(1-\lambda_i)[I-\lambda_iR_*]$ to Equation 
\eqref{eq.posrad} in Corollary \ref{cor.posrad}  with $\lambda=\lambda_i$, 
and then using the resolvent equation $$(Id-\lambda R_*)\circ(\sum_{n=0}^\infty\lambda^n(R_*)^ n)=Id,$$  
we obtain

\begin{align*}r_i(1-\lambda_i)\gamma_{v_1}(z)&=r_i(1-\lambda_i)\left[Id-\lambda_i\cdot 
R_*\right](A_{v_1}(z,\lambda_i)\\
&+r_i(1-\lambda_i)\lambda_i \sum_j \frac{1}{R''(c_j)} A_{v_1}(c_j,
\lambda_i)\cdot \gamma_{v_j}(z)).\end{align*}
By Lemma \ref{lemma.31}, we have that $\displaystyle{\lim_{i\rightarrow \infty}}\|r_i(1-\lambda_i)\left[Id-\lambda_i\cdot 
R_*\right]A_{v_1}(z,\lambda_i)\|_{L_1}=0$. 

Rearranging, we get  \begin{align*}r_i(1-\lambda_i)[Id-\lambda_i R_*] A_{v_1}(z,\lambda_i)&=[Id-R_*]
[r_i(1-\lambda_i)A_{v_1}(z,\lambda_i)]\\ & +R_*[r_i(1-\lambda_i)^2A_{v_1}
(z,\lambda_i)].\end{align*}

Finally $$\|R_*(r_i(1-\lambda_i)^2 A_{v_1}(z_1,\lambda_i))\|_{L_1}\leq |1-\lambda_i|\|r_i(1-\lambda_i)A_{v_1}(z,\lambda_i)\|_{L_1}$$
But the latter converges to $0$ as $i$ tends to $\infty$ since $\|r_i(1-\lambda_i)A_{v_1}(z,\lambda_i)\|_{L_1}$ is bounded. 
Since $\phi(z)=\displaystyle{\lim_{i\rightarrow \infty}} r_i(1-\lambda_i) A_{v_1}(z,\lambda_i)$
holds almost everywhere, we are done. \end{proof}

\begin{lemma}\label{lm.nonzero}
 Under conditions of  Proposition \ref{prop.strucintegrab}, assume that  measure  $\nu_0$ is an $M$-measure, then $\phi(z)=\displaystyle{\int \gamma_z(z)d\nu_0(a)}$ is non-zero identically on $\C\setminus P_c(R).$ 
\end{lemma}

\begin{proof} Since  $\nu_0$ is an $M$-measure, then we can think that  $\nu_0$ is a non-zero Abel measure. By assumption $R$ is structurally stable, then we can assume that $\sigma(c)\notin \ell_1$ by Proposition 9 and Proposition 10 (2) in \cite{MakRuelle}. A direct computation gives $\int \frac{f(R)}{R'}d\nu_0=\int f d\nu_0$ for every continuous function $f$. 
 
 Now we proceed by contradiction, if $\phi(z)\equiv 0$ on $\C\setminus P_c(R)$, then by assumption $$\int  \frac{1}{z-a}d\nu_0(a)=\frac{A}{z}+\frac{B}{z-1},$$ for suitable $A$ and $B$ complex numbers not both equal to $0$. In other words, if $\nu_1=\nu_0-(A\delta_0+B\delta_1) $, where $\delta_a$ denotes the delta measure at the point $a,$ the function $\Phi(z)=\int \frac{1}{z-a}d\nu_1=0$ for $z\in \C\setminus P_c(R)$.
 
 For $z\in \C \setminus P_c(R)$ we have  $$0=R_*(\Phi(z))=\int R_*\left(\frac{1}{z-a}\right)d\nu_1(a)$$ by part 1 of Lemma 5 in \cite{MakRuelle} we have 
 $$R_*(\Phi(z))=\int \left(\frac{1}{R'(a)(z-R'(a))}+\sum_i \frac{1}{R''(c_i)(c_i-a)(z-R(c_i))}\right)d\nu_1(a)$$ $$=\int \frac{1}{R'(a)(z-R(a))} d\nu_1(a)+ R_1(z)$$
where $R_1$ is a rational function with simple poles only in critical values of $R$.
But, since $\int \frac{1}{R'(a)(z-R(a))}d\nu_0(a)=\int \frac{1}{z-a}d\nu_0(a)$ we conclude $$\int \frac{1}{R'(a)(z-R(a)} d\nu_1(a)=\int \frac{1}{R'(a)(z-R(a))}d\nu_0(a)-\frac{A}{R'(0)z}-\frac{B}{R'(1)(z-1)}$$$$=\frac{A}{z}\left(1-\frac{1}{R'(0)}\right)+\frac{B}{z-1}\left(1-\frac{1}{R'(1)}\right):=R_0(z)$$ is a rational function with $0=R_*(\Phi(z))=R_0(z)+R_1(z)$. Then by the Residue theorem either $R'(0)=1$ or $R'(1)=1$ which contradicts the structural stability of $R.$
\end{proof}
 The following theorem is the Abel version of Theorem \ref{th.Norlundregular}. 
 
 \begin{theorem}\label{TheoremA}
 Let $c\in J(R)$ be a critical point so that $P_c(R)$ is bounded and $\sigma(c)$ has radius of convergence at least $1$. Then $R$ is unstable whenever the Abel measure associated to $c$ is an $M$-measure. 
 \end{theorem}

 \begin{proof} Let $\nu_0$ be an Abel measure associated to $c$ then there is a sequence of $\lambda_i\rightarrow 1$ such that $\frac{\nu_{\lambda_i}}{\|\nu_{\lambda_i}\|}$ converges $*$-weakly to $\nu_0$. Since $\nu_0$ is an M-measure then $\nu_0\neq 0$.
 
Assume that $R$ is structurally stable then, by Lemma \ref{lm.nonzero}, we have that   $\phi(z)=\int \gamma_a(z)d\nu_0(a)\not\equiv 0$ on 
$\C\setminus P_{c}(R)$, and $R_*(\phi)=\phi$ by Proposition \ref{prop.strucintegrab}. Hence by  Corollary 12 in \cite{MakRuelle} and Lemma 3.16 of   
\cite{Mc1} (see also \cite{Adamrigidity}),  $R$ is a flexible Latt\`es map. This is a 
contradiction with the structural stability of $R$. \end{proof}
 
 Now we are ready to  prove  Theorem \ref{th.Lyapunovnon}.
 
 \begin{proof}[Proof of Theorem \ref{th.Lyapunovnon}]
 Since $R(x)$ has non-negative lower Lyapunov exponent then $\sigma(x)$ has radius of convergence at least 1. Because 
 $R'(z)\geq 0$ on $\bigcup_{n=1}^\infty R^n(x)$ then, for each  $0\leq \lambda <1$, the measure $$\omega_\lambda=\frac{\displaystyle{\sum_{n=0}^\infty} \lambda^n \sigma_n(x) \delta_{R^n(x)}}{\displaystyle{\sum_{n=0}^\infty} \lambda^n \sigma_n(x)}$$
  is a
 probability measure. Let $\omega$ be an accumulation point of $\{\omega_\lambda\}$, then $\omega$ is a probability measure. Since $\sigma(x)\notin \ell_1$, then a straightforward calculation gives 
 \begin{displaymath}\label{star}\tag{2} \int \frac{\phi(R)}{R'}d\omega=\int \phi d\omega 
 \end{displaymath}  for every continuous function $\phi$ on the support of $\omega.$  
 
 Assume that $R$ is stable in the family $R_\lambda$, then by Proposition \ref{prop.motivation} there exist a function $F$, continuous on the plane, with 
 $$\frac{F(R)(a)}{R'(a)}-F(a)=\frac{1}{R'(a)}$$
 integrating the latter equation with respect to $\omega$ leads to a contradiction with equation (\ref{star}) above. Since the right side becomes $0$ whereas the left side becomes $1.$
 
 If $x$ is a critical point, then $\sigma(x)$ does not belong to $\ell_1$ by Corollary \ref{cor.motivation}.
  
 \end{proof}

\textbf{Remark}. Indeed  the non-negative condition on the orbit of $x$ is not necessary whenever $\omega$ is non-zero, $\sigma(x)\notin \ell_1$ and $R(x)$ has non-negative lower Lyapunov exponent. 
 
Theorem \ref{th.Lyapunovnon} gives examples of rational maps having non-zero Abel measures.  Now we give three criteria for a complex sequence to allow a non-zero Abel measure.  The first
 is elementary and states that if the arguments of the sequences 
 $\lambda^n_i a_n$ are close enough to $0$ then this sequence has a non-zero Abel measure with respect to every  rational map $R$ and  $z\in \bar{\C}$. More precisely, we have the following lemma.
 
 \begin{lemma}
 Let $\{a_n\}$ be a complex sequence with radius of convergence at least $1$.
 Assume  there exist $\alpha<1$ and a complex sequence $\{\lambda_i\}$ converging 
 to $1$ with $|\lambda_i|<1$ such that  
 $$\left| \lambda_i^n a_n-|\lambda_i^n a_n|\right|\leq \alpha |\lambda_i^n a_n|.$$
 Then for every rational map $R$ and every point $z\in \C$, there exists a non-zero Abel  measure with respect to $\{a_n\}$ and $z$.
 \end{lemma}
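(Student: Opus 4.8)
The plan is to produce explicit normalizing constants $r_i$ making the total variation of each $r_i\nu_{\lambda_i}$ equal to $1$, while using the hypothesis to keep the total masses bounded away from $0$; the weak-$*$ limit of a suitable subsequence is then the desired non-zero Abel measure. The whole point is that the hypothesis is exactly a cone condition preventing cancellation.

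First I would reformulate the assumption. Writing $w=\lambda_i^n a_n=|w|e^{i\theta}$ and using $|e^{i\theta}-1|=2|\sin(\theta/2)|$, the inequality $\big|\lambda_i^n a_n-|\lambda_i^n a_n|\big|\le\alpha|\lambda_i^n a_n|$ becomes $2|\sin(\theta/2)|\le\alpha$, hence $\cos\theta\ge 1-\tfrac{\alpha^2}{2}=:\beta$. Since $\alpha<1$ we get $\beta>\tfrac12>0$, so every term obeys the uniform bound $\mathrm{Re}(\lambda_i^n a_n)\ge\beta\,|\lambda_i^n a_n|$, independently of $n$ and $i$. In other words all the weights $\lambda_i^n a_n$ lie in a fixed cone of half-angle $<\pi/2$ about the positive real axis, so no cancellation can occur among them.

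Next I would normalize and extract a limit. Because $\{a_n\}$ has radius of convergence at least $1$ and $|\lambda_i|<1$, the quantity $M_i=\sum_n|a_n||\lambda_i|^n$ is finite and (for a nontrivial sequence and $\lambda_i$ near $1$) positive. Setting $r_i=\frac{1}{(1-\lambda_i)M_i}$ makes $r_i\nu_{\lambda_i}=\frac{1}{M_i}\sum_n a_n\lambda_i^n\,\delta_{R^n(z)}$ a measure of total variation exactly $1$, and summing the cone inequality over $n$ gives $\mathrm{Re}\!\left(\int d(r_i\nu_{\lambda_i})\right)=\frac{1}{M_i}\,\mathrm{Re}\!\left(\sum_n a_n\lambda_i^n\right)\ge\beta$. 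All these measures are supported on the compact set $K=\overline{\{R^n(z):n\ge 0\}}\subset\bar{\C}$, so by the Banach--Alaoglu theorem applied to the unit ball of $C(K)^*$ some subsequence converges $*$-weakly to a finite measure $\nu_0$. Pairing with the constant function $1\in C(K)$ and passing to the limit yields $\mathrm{Re}\int d\nu_0\ge\beta>0$, whence $\nu_0\neq 0$; by definition $\nu_0$ is a non-zero Abel measure for $\{a_n\}$ with respect to $z$.

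The only delicate point is the non-vanishing of $\nu_0$. Normalizing the total variation to $1$ by itself does not prevent the total mass from cancelling in the $*$-weak limit; it is precisely the cone hypothesis that supplies a uniform positive lower bound on $\mathrm{Re}\int d(r_i\nu_{\lambda_i})$, and this bound survives the limit because $1$ is continuous on the compact support $K$. Everything else is the standard weak-$*$ compactness argument already invoked for Abel measures in $\ell_\infty$, so I do not expect further obstacles.
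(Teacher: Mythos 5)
Your proof is correct and follows essentially the same route as the paper's: both normalize the measures by $M_i=\sum_n|a_n||\lambda_i|^n$, use the hypothesis to keep the normalized total mass uniformly bounded away from zero (the paper via the triangle inequality $\bigl|\sum_n\lambda_i^na_n-\sum_n|\lambda_i^na_n|\bigr|\leq\alpha\sum_n|\lambda_i^na_n|$, you via the equivalent cone bound $\mathrm{Re}(\lambda_i^na_n)\geq(1-\alpha^2/2)|\lambda_i^na_n|$), and conclude by weak-$*$ compactness on the orbit closure that some accumulation point is a non-zero Abel measure. The only cosmetic difference is that the paper argues by contradiction while you argue directly; the content is the same.
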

 \begin{proof}
 Fix $R$ and $z\in \C$. Then every $*$-weak limit of the family of the probability 
 measures 
 $$w_\lambda=\frac{\sum |\lambda|^n |a_n|\delta_{R^n(R(z))}}{\sum |\lambda^n||
 a_n|}$$ is a probability measure. Write $$u_\lambda=\frac{\sum \lambda^n a_n
 \delta_{R^n(R(z))}}{\sum |\lambda^n||a_n|},$$ so that $u_\lambda$ is a family of 
 complex valued measures absolutely continuous with respect to $w_\lambda.$ For the 
 sequence $\{\lambda_i\}$ assume by contradiction that $u_{\lambda_i}$ converges $*$-weakly to $0$.
Define $X=\overline{\bigcup R^n(z)}$ and let $1_X$ be the characteristic function on $X$. Notice that the supports of $w_\lambda$ and $u_\lambda$ belong to $X$. Now 
the inequality
$$ 1=\lim_{\lambda_i \rightarrow 1}
\left|\int 1_X du_{\lambda_i}-\int 1_X dw_{\lambda_i}\right|$$$$
 =\lim_{\lambda_i \rightarrow 1}
\frac{|\sum_n \lambda^n_i a_n-\sum_n |\lambda^n_i a_n||} {\sum_n |\lambda^n_i a_n|}$$$$
 \leq \lim_{\lambda_i \rightarrow 1}\frac{\sum_n| \lambda^n_i a_n- |\lambda^n_i a_n||}{\sum_n |\lambda^n_i a_n|}$$$$
 \leq \alpha <1 $$
establishes a contradiction. \end{proof}

 The second criterion is connected with the $L_1$ norm of the function $A_z(\lambda)$ 
 on  $\C\setminus P_c(R)$. Indeed, we prove a more general statement. 
 
 \begin{lemma}\label{lm.weakaccum} Let $K\subset \C$ be compact and let $\nu_i$ be a bounded sequence
 of complex valued measures on $K$. If we assume $$\limsup_i \int_{\C} \left|\int_\C 
 \gamma_a(z)d\nu_i(a)\right||dz|^2>0,$$ then there exists a $*$-weak accumulation 
 point $\nu_0$ which is not null. The reciprocal is also true.
 \end{lemma}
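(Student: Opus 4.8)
The plan is to recast the whole statement in terms of the bounded linear operator $T\colon \mathcal{M}(K)\to L_1(\C)$, defined on the space $\mathcal{M}(K)$ of finite complex measures on $K$ by $T\nu(z)=\int_\C \gamma_a(z)\,d\nu(a)$, so that the quantity appearing in the lemma is exactly $\|T\nu_i\|_{L_1}$. Boundedness of $T$ is immediate from the estimate $\int_\C|\gamma_a(z)|\,|dz|^2\le C|a\ln|a||$ already used in Lemma \ref{lem.convergence}, together with Fubini: $\|T\nu\|_{L_1}\le C\,(\sup_{a\in K}|a\ln|a||)\,\|\nu\|$. Both directions will then follow from one continuity property: \emph{if $\{\nu_i\}$ is bounded and $\nu_i\to\nu_0$ $*$-weakly, then $T\nu_i\to T\nu_0$ in the $L_1$ norm.}

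First I would identify the adjoint. Pairing against $g\in L_\infty(\C)=L_1(\C)^*$ and using Fubini gives $\langle T\nu,g\rangle=\langle\nu,T^*g\rangle$ with $T^*g(a)=\int_\C \gamma_a(z)g(z)\,|dz|^2=F_g(a)$, the normalized potential, a continuous function of $a$. I would then show that $H:=T^*(B_{L_\infty})=\{F_g:\|g\|_\infty\le1\}$ is relatively compact in $C(K)$ by Arzel\`a--Ascoli: uniform boundedness follows from $|F_g(a)|\le\|g\|_\infty\int|\gamma_a|\,|dz|^2$, and equicontinuity from $|F_g(a)-F_g(a')|\le\|g\|_\infty\|\gamma_a-\gamma_{a'}\|_{L_1}$, once one knows $a\mapsto\gamma_a$ is continuous from $K$ into $L_1(\C)$ (the references \cite{GardLakic,KrushkalQCRiem} that yield the norm estimate also give this continuity of the moving simple pole). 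With $H$ relatively compact, an $\epsilon$-net argument converts $*$-weak convergence into uniform convergence on $H$: writing $\|T\nu_i-T\nu_0\|_{L_1}=\sup_{\|g\|_\infty\le1}|\langle\nu_i-\nu_0,T^*g\rangle|=\sup_{h\in H}|\langle\nu_i-\nu_0,h\rangle|$, approximating each $h$ by a net element up to $\epsilon$ and using $\sup_i\|\nu_i\|<\infty$ together with $\langle\nu_i-\nu_0,h_j\rangle\to0$, one gets $\|T\nu_i-T\nu_0\|_{L_1}\to0$.

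Granting this, the forward direction is quick. From $\limsup_i\|T\nu_i\|_{L_1}>0$ I extract a subsequence with $\|T\nu_i\|_{L_1}\ge\delta>0$; since $\{\nu_i\}$ is bounded, Banach--Alaoglu yields a further subsequence with $\nu_i\to\nu_0$ $*$-weakly, and norm continuity gives $\|T\nu_0\|_{L_1}=\lim_i\|T\nu_i\|_{L_1}\ge\delta>0$, so $\nu_0\neq0$ is the desired $*$-weak accumulation point. For the reciprocal, suppose $\nu_0\neq0$ is a $*$-weak accumulation point, say $\nu_{i_k}\to\nu_0$; norm continuity gives $\|T\nu_{i_k}\|_{L_1}\to\|T\nu_0\|_{L_1}$, so it remains to see $T\nu_0\not\equiv0$. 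Writing $T\nu_0(z)=\frac{-1}{z(z-1)}\int\frac{a(a-1)\,d\nu_0(a)}{z-a}$ exhibits $T\nu_0$ as a nonzero multiple of the Cauchy transform of $a(a-1)\,d\nu_0$, which by Theorem 10.4 of \cite{GamelinUniform} (already invoked for the $M$-measure lemma) vanishes a.e.\ only if that measure is zero, i.e.\ only if $\nu_0$ is carried by $\{0,1\}$; excluding this degenerate case (automatic once $0,1\notin K$) we obtain $\|T\nu_0\|_{L_1}>0$, hence $\limsup_i\|T\nu_i\|_{L_1}\ge\|T\nu_0\|_{L_1}>0$.

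The main obstacle is precisely the norm-continuity claim, whose subtlety is that one cannot pass $*$-weak limits under the integral sign: the kernel $a\mapsto\gamma_a(z)$ is singular at $a=z$, so for $z\in K$ there is no pointwise convergence $T\nu_i(z)\to T\nu_0(z)$, and the naive dominating function $\sup_{a\in K}|\gamma_a(z)|$ need not be integrable when $K$ is large. Routing everything through the adjoint $T^*$ and the compactness of $H=T^*(B_{L_\infty})$ in $C(K)$ sidesteps both difficulties, since it uses only the $L_1$-continuity of $a\mapsto\gamma_a$ and no pointwise control whatsoever; in particular the argument is insensitive to the size of $K$ and does not require $m(K)=0$.
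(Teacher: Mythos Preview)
Your proof is correct and shares the paper's core ingredient: the compactness of the operator $\mu\mapsto F_\mu|_K$ from $L_\infty(\C)$ to $C(K)$. The paper simply cites the quasiconformal references for this compactness and then dualizes, whereas you prove it directly via Arzel\`a--Ascoli using the $L_1$-continuity of $a\mapsto\gamma_a$; both routes yield the same consequence, namely that bounded $*$-weakly convergent sequences in $M(K)$ are mapped to norm-convergent sequences in $L_1$.

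The forward implication is organized differently. The paper first uses compactness of $T^*$ to pass to a subsequence with $f_i=T^*(\nu_i)\to f\neq 0$ in $L_1$, and only then argues that the $\nu_i$ cannot tend $*$-weakly to $0$: it observes that $\partial_{\bar z}f_i=\nu_i$ in the sense of distributions, so a $*$-weak null limit would force $\partial_{\bar z}f=0$, and Weyl's lemma together with $f\in L_1(\C)$ would give $f\equiv 0$. Your route extracts the $*$-weak limit $\nu_0$ first via Banach--Alaoglu and then invokes the norm continuity you established to conclude $\|T\nu_0\|_{L_1}\ge\delta>0$. Your argument is slightly cleaner in that it avoids the distributional derivative and Weyl's lemma entirely; the paper's has the advantage of making the link between $T^*\nu$ and the Cauchy transform explicit at this stage.

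For the reciprocal you correctly isolate the one genuine subtlety the paper glosses over: the injectivity of $\nu\mapsto T\nu$ fails exactly when $\nu$ is carried by $\{0,1\}$, since $\gamma_0\equiv\gamma_1\equiv 0$. The paper's proof simply asserts $T^*(\nu_0)\not\equiv 0$ without comment; in the applications this is harmless because the normalization places $0,1,\infty$ among the fixed points of $R$ and the supports in question lie in $P_c$, but for the lemma as stated your caveat ``$0,1\notin K$'' is the right hypothesis to add.
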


 \begin{proof}
 According to a well known result on quasiconformal theory (see for example  
F. Gardiner and N. Laki\v{c} \cite{GardLakic} or S. L. Krushkal \cite{KrushkalQCRiem}) the operator 
 $T:L_\infty(\bar{\C})\rightarrow C(K)$ given by 
 $$T(\mu)(a)=\int_\C \gamma_a(z)\mu(z)|dz|^2,$$ which maps $\mu$ to $F_\mu|_K$, is 
 continuous and compact. The same is true for the dual operator $T^*: M(K)\rightarrow (L_\infty(\C))^*$ 
 given by $$T^*(m)(z)=\int_K \gamma_a(z)dm(a),$$ which is continuous and compact.
 
 By Fubini's theorem we have $rank(T^*)\subset L_1(\C)$ and each $T^*(m)(z)$  is  
 holomorphic off of $K\cup \{0,1,\infty\}.$ Hence $T^*:M(K)\rightarrow L_1(\C)$ is 
 a compact operator.  Now, by assumption, passing to a subsequence, we have that
 $$f_i(z)=\int_K \gamma_a(z)d\nu_i(z)=T^*( \nu_i)(z)$$ converges in norm to a
 non-zero $f$ in $L_1(\C).$
 
 Note that we have $\partial_{\bar{z}}f_i=\nu_i$ in the sense of distributions. If 
 $\nu_i$ converges $*$-weakly to $0$, then, by continuity, the integrability of $f$, and 
 an application of  Weyl's lemma we get $f=0$, which is a contradiction, so $\nu_i$ 
 cannot converge $*$-weakly to $0$. 

 Reciprocally, if a measure $\nu_0\neq 0$ is a $*$-weak limit of $\nu_i$ then 
 $T^*(\nu_0)\not\equiv 0$ almost everywhere on $\C$ and $T^*(\nu_i)$ converges to $T^*(\nu_0)$ 
 in norm.\end{proof}
 
 The last criterion is formulated for bounded sequences and follows from Proposition \ref{prop.motivation}. In this
 situation, we consider the 
 Abel sum $S(\lambda)=\sum a_n \lambda^n$. If the Abel averages $A(\lambda)=(1-\lambda)S(\lambda)$ is not continuous at $1$ from the left, then there exists a non-zero Abel measure. However, if  $\limsup_{\lambda < 1} |S(\lambda)|$ is bounded then $\lim_{\lambda\rightarrow 1^{-}}A(\lambda)=0$. In this situation we have the following existence lemma. 
 
 \begin{lemma}\label{lem.36}
  If $R$ is structurally stable and $z_0\in \C$ has infinite bounded forward orbit such that $\sigma(z_0)\in \ell_\infty$. Then there is a non-zero Abel measure
  whenever $\sigma(z_0)$ is not Abel summable to a finite limit. Moreover, if $z_0$ is a critical value, then the Abel measure is non-zero whenever $\sigma(z_0)$ is not Abel summable to zero.
 \end{lemma}

 \begin{proof}
Let $F$ be the continuous function on the plane constructed in the proof of Proposition \ref{prop.motivation}, then for every $n$ we have 
  $$\frac{F(R^n(z_0))}{(R^n)'(z_0)}=F(z_0)-1+(1+\sigma_1(z_0)+...+\sigma_n(z_0))=F(z_0)-1+S_n(z_0).$$ 
  Multiplying by $\lambda^n$ and adding with respect to $n$ we get
  $$\sum_{n=0}^\infty \lambda^n F(R^n(z_0))\sigma_n(z_0)=\frac{F(z)-1}{1-\lambda}+\sum_{n=0}^\infty \lambda^n S_n(z_0),$$ and so $$(1-\lambda)\sum_{n=0}^\infty \lambda^n F(R^n(z_0))\sigma_n(z_0)=F(z_0)-1+\sum_{n=0}^\infty\lambda^n \sigma_n(z_0).$$

  Since $\sigma(c)\in \ell_\infty$ and the only
  Abel measure is $0$, then the lefthand side of the previous equation
  converges to $0$ as $\lambda\rightarrow 1^-$, which contradicts the fact that the righthand side is not continuous at $1$ from the left. If $z_0$ is a critical value, then by Proposition \ref{prop.motivation} we have $F(z_0)=1$ and now we apply the formulae above to finish the proof.
 \end{proof}

We note that real infinitely renormalizable quadratic polynomials $f_v(z)=z^2+v$, with bounded combinatorics and definite moduli bounded below by  $m$, serve as examples of maps with $\sigma(c)\in \ell_\infty$ for the critical point $c$. Indeed, let $\mathcal{R}_{p_n}(f_v)(z)=\alpha_n f_v^{p_n}(z)\alpha_n^{-1}$  be the $n$-th level of renormalization for a suitable $\alpha_n>1$ (see \cite{Mc2}). By Theorem 5.8 in  \cite{Mc1}, the renormalizations  $\mathcal{R}_{p_n}(f_v)$ converge to a quadratic-like map $g_0$ when $n$ tends to infinity. Let $v_n=R_{p_n}(f_v)(0)$, then we have $R'_{p_n}(f_v)(v_n)=(f_v^{p_n})'(f_v^{p_n-1}(v))$.   By Lyubich's Theorem II in \cite{LyuI}, the map $f_v^{p_n-1}$ defines a diffeomorphism of uniformly bounded distortion with a constant depending only on $m$. Then $(f_v^{p_n})'(v)$ is comparable with $(f_v^{p_n})'(f_v^{p_n-1}(v))$ by a constant $K$ which depends only on $m$.  Now, for sufficiently big $p$,  by Theorem 8.1 in \cite{Mc2}, for a constant $C$ depending on $m$, we have that  $|(f_v^j)'(v)|>C$ for $j=1,...,p-1$. 

 To prove the Abel version of  Theorem \ref{th.NorlundTop} we need some additional preparation. Recall that a positive 
 Lebesgue measurable subset $W$ of $\bar{\C}$ is called \textit{wandering} if $R^{-n}
 (W)$  forms a family of pairwise almost disjoint sets with respect to the Lebesgue 
 measure. The union $D(R)$ of all wandering sets is called  the \textit{dissipative set}, 
 its complement $\bar{\C}\setminus D(R)$ is the \textit{conservative set}. A 
 Fatou component $U$ belongs to $D(R)$ precisely when $U$ is not an invariant rotational
 component.
 
 We start the proof of Theorem \ref{th.NorlundTop} with  the following lemma which is reminiscent of Lemma  \ref{lm.struc.invariant} and the arguments of the proof of Lemma \ref{lemma.31}.
 
 \begin{lemma}\label{lm.measures}
  Let $R$ be a structurally stable map. If the measures $\nu_{\lambda_i}$ converges $*$-weakly to $0$ for a suitable sequence of reals $\lambda_i \rightarrow 1$ and a critical point $c\in J(R)$, then 
$$
\lim_{\lambda_i\rightarrow 1}A_{R(c)}(\tilde{c},\lambda_i) = 
\begin{cases}
  \phantom{-}0  & \text{if}\ \tilde{c}\neq c \\
  R''(c)           & \text{if}\ \tilde{c}=c.
\end{cases}
$$
 \end{lemma}

 \begin{proof}
 By assumption,  $\nu_{\lambda_i}$ converges $*$-weakly to 0, then 
 $\sup\|\nu_{\lambda_i}\|<\infty$, where $\|\nu\|$ is the total variation of $\nu$.
 Since $P_c(R)$ is bounded, by Fubini's theorem we have 
 $$\|(1-\lambda)A_{R(c)}(z,\lambda_i)\|_{L_1}\leq M \sup_{a\in P_c(R)} |a||\ln|a|| \sup_i \|\nu_{\lambda_i}\|\leq \infty$$
for a suitable constant $M$ depending on $P_c(R).$
For every critical point $c_j$ we define $$D(c_j,\lambda_i)=\begin{cases}
\phantom{-}\left|\lambda_i \frac{A_{R(c)}(c_j,\lambda_i )}{R''(c_j)}\right |   & \text{if}\ c_j\neq c \\
\left|\lambda_i \frac{A_{R(c)}(c_j,\lambda_i )}{R''(c_j)}-1\right|          & \text{if}\ c_j=c.
\end{cases}$$ 
By structural stability,  Equation \eqref{eq.posrad} of Corollary  \ref{cor.posrad} and 
Lemma \ref{lm.struc.invariant} there are constants $M_1$ and $M_2$ such that for every $i$ 
$$M_1 \max_j\left(D(c_j,\lambda_i)\right )\leq \|(1-\lambda)A_{R(c)}(z,\lambda_i)\|_{L_1}\leq M_2 \left(\sum_j D(c_j,\lambda_i)\right).$$ 
Since $\nu_{\lambda_i}$  converges $*$-weakly to $0$ then by Lemma \ref{lm.weakaccum}, we have $\displaystyle{\lim_{i\rightarrow \infty}}\| (1-\lambda)A_{R(c)}(z,\lambda_i)\|_{L_1}=0$, thus for every $j$ the limit $\displaystyle{\lim_{i\rightarrow \infty}} D(c_j,\lambda_i)=0$ which finishes the proof. 
 \end{proof}

We have the following. 
 
 \begin{proposition}\label{the.tec.principal}
 Let $R$ be as in Lemma \ref{lm.measures}. If the measures $\nu_{\lambda_i}$ converge $*$-weakly to $0$ for a suitable sequence of reals $\lambda_i \rightarrow 1$ and a critical point $c\in J(R)$  then $$\lim_{i\rightarrow \infty}\frac{1}{1-\lambda_i} \int 
 \gamma_a(z)d\nu_{\lambda_i}(a)=0$$ for almost every $z\in D(R)$. Moreover, on the Fatou set the limit above is uniform
on compact subsets outside the postcritical set $P(R)$.

 \end{proposition}
 
 \begin{proof}
First let us show that for any $\phi\in L_1(\C)$ the series  $\sum R_*^{n}(\phi)$ is finite 
and converges absolutely almost everywhere on $D(R)$. It is enough to show that $\sum |R_*^{n}(\phi)|$ is 
integrable on any wandering set $W$. Direct computations show
$$\int_W |R_*^{n}(\phi)(z)||dz|^2\leq \int_{R^{-n}(W)}|\phi(z)||dz|^2,$$  which yields 
$$\int_{W}\sum_{n=0}^\infty |R_*^{n}(\phi)(z)|\leq \sum_{n=0}^\infty \int_{R^{-n}(W)}|\phi(z)||dz|^2\leq \int_\C |
\phi(z)||dz|^2.$$
In particular, $R_*^{n}(\phi)(z)$ converges to $0$ almost everywhere on $D(R).$ 

Second, if $z_0\in F(R) \setminus P(R)$, we can find a disk $D_0\subset D(R)$ 
centered at $z_0$. Now suppose that  $\phi$ is holomorphic on $\C\setminus P(R)$. 
Since we have $$\int_{D_0}|R_*^{n}(\phi)(z)|dz|^2\leq \int_{\C}|\phi(z)||dz|^2,$$ by the 
mean value theorem $\{R_*^{n}(\phi)\}$ forms a normal family of holomorphic functions on $D_0.$ By the discussion above $R_*^{n}(\phi)$ and $\sum R_*^{n}(\phi)(z)$ converge to 
their respective limits uniformly on compact subsets of $D_0.$  Then by the Abel 
theorem we get 
$$\lim_{\lambda_i \rightarrow 1}\sum \lambda_i^n R_*^{n}(\phi)(z)=\sum R_*^{n}
(\phi)(z)$$ almost everywhere on  $D(R)$ and uniformly 
on compact subsets of $F(R)\setminus P(R).$

 To finish the proof, we take  $\lambda_i\to 1$ and apply Lemma \ref{lm.measures} and the discussion above  to the
 Equation \eqref{eq.posrad} of Corollary \ref{cor.posrad}  to get 
 $$\lim_{i\rightarrow \infty} A_{R(c)}(z,\lambda_i)=\lim_{i\rightarrow \infty}\frac{1}
 {1-\lambda_i} \int_\C \gamma_a(z)d\nu_{\lambda_i}(a)=0$$ almost everywhere on the dissipative set $D(R)$ and uniformly on compacts subsets of $F(R)\setminus P(R).$\end{proof}
Now we are ready to prove the Abel version of Theorem \ref{th.NorlundTop}.

\begin{theorem}\label{TheoremB}
 Assume that for a critical point $c\in J(R)$ the sequence  measures $\{\nu_{\lambda_i}\}$ converges $*$-weakly to $0$ for a suitable sequence $\lambda_i<1$ converging to $1.$ Then $R$ is an unstable map whenever $c$ and the individual  postcritical set 
 $P_c(R)$ are separated by the Fatou set.
\end{theorem}

 \begin{proof}
By contradiction, assume that $R$ is structurally stable. Then by Lemma \ref{lm.measures} we get $$\lim_{\lambda_i\to1}A_{R(c)}(c,
\lambda_i)=R''(c)\neq 0.$$
On the other hand, by Proposition \ref{the.tec.principal} we have that $$A_{R(c)}(z,\lambda_i)=\frac{1}{1-\lambda_i}\int \gamma_a(z)d\nu_{\lambda_i}(a)$$ converges to $0$ uniformly on compact 
subsets of $F(R)\setminus P(R).$ By assumption we can select a Jordan curve $\gamma \subset 
F(R)\setminus P(R)$ separating $c$ and $P_c(R)$. Since $A_{R(c)}(z, \lambda_i)$ is holomorphic for $z$ in the interior of $\gamma$,  by Cauchy's theorem we have
$$ \lim_{\lambda_i\rightarrow 1} A_{R(c)}(c,\lambda_i)=\frac{1}{2\pi i}\int_{\gamma} \frac{A_{R(c)}(z,
\lambda_i)}{z-c}dz=0,$$ a contradiction. \end{proof}
 
\begin{proof}[Proof of Theorem \ref{bounded}]
The first part of the Theorem is a consequence of Lemma \ref{lem.36} and Theorem \ref{TheoremA}.

For the last part, we proceed by contradiction. Assume  $R$ is structurally stable. Since the measures $\nu_\lambda$
form a uniformly bounded family of measures  for $0\leq \lambda <1$ then by Theorem \ref{th.NorlundTop}, every $*$-weak limit of $\nu_\lambda$ for $\lambda\rightarrow 1$ is a non-zero Abel measure. 

(2)  Since
$P_c(R)$ has measure zero, then we have a contradiction with Theorem \ref{TheoremA}. 

(3) Assume that $\sigma(c)$ is convergent.  Let $\tau\neq 0$ be an Abel measure which is not an $M$-measure. Then 
$$f(z)=\int_\C \gamma_a(z)d\tau(a)$$ is a non-zero integrable function on $\C$ supported on 
$P_c(R)$ satisfying $R_*(f)=f.$ By Lemma 11 in \cite{MakRuelle} there exists an 
invariant Beltrami differential $\mu$ with $\mu(z)=\frac{|f(z)|}{f(z)}$ almost 
everywhere on the support of $f.$
Computations give rise to \begin{align*}0&\neq\int_\C |f||dz|^2=\int_\C \mu(z)f(z)|dz|^2\\
&=\lim_{\lambda_i\rightarrow 1}(1-\lambda_i)\int_\C \mu(z) A_{R(c)}(z,\lambda_i)|dz|^2\\
&=\lim_{\lambda_i\rightarrow 1}(1-\lambda_i)\left[\left(\frac{1}{R''(c)}A_{R(c)}(c,
\lambda_i)-1\right) \int_\C \mu(z)B_{R(c)}(z,\lambda_i)|dz|^2\right.+\\
&\left.\hskip 2cm+ 
\sum_{\tilde{c}\in Crit(R)\setminus \{c\}} \frac{1}{R''(\tilde{c})}A_{R(c)}(\tilde{c},
\lambda_i)\int_\C \mu(z)B_{R(\tilde{c})}(z,\lambda)|dz|^2\right].\end{align*}
By the invariance of $\mu$, this reduces to 
\begin{equation}\label{cuatro}\tag{3}0\neq F_\mu(v)\lim_{\lambda_i\rightarrow 1}\left[\frac{1}{R''(c)}A_{R(c)}(c,
\lambda_i)-1\right]+\sum_{\tilde{c}\in Crit(R)\setminus \{c\}}\frac{F_\mu(\tilde{v})}
{R''(\tilde{c})}\lim_{\lambda_i\rightarrow 1} A_{R(c)}(\tilde{c},\lambda_i).\end{equation}

Now, as $R$ is stable and  since $\sigma(c)$ is convergent,   
then Corollary by \ref{cor.motivation} we have that  $\sigma(c)$ converges to $0$. Let 
$s_n(\bar{c})$ be the partial sums of the formal series $A_{R(c)}(\bar{c})$, here $\bar{c}$ is any critical point. Then the assumptions, Lemma \ref{lm.struc.invariant}, and the formula after Lemma \ref{lem.ratcrit} together with the fact that $\sigma(c)$ converges to $0$
yield
$$\lim_{n\rightarrow \infty} 
s_n(\bar{c})= 
\begin{cases}
  \phantom{-}0  & \text{if}\ \bar{c}\neq c \\
  R''(c)           & \text{if}\ \bar{c}=c.
\end{cases}
$$ Abel's theorem then gives $$\lim_{\lambda\rightarrow 1}A_{R(c)}(\bar{c},
\lambda)=(1-\lambda)\sum_n s_n(\bar{c})\lambda^n= 
\begin{cases}
  \phantom{-}0  & \text{if}\ \bar{c}\neq c \\
  R''(c)           & \text{if}\ \bar{c}=c.
\end{cases}
$$ After replacing these values in (\ref{cuatro}) we achieve the desired contradiction.  \end{proof}

Note that the condition that $P_c(R)$ has measure zero is used to guarantee that the measure $\tau$ is an $M$-measure.  As mentioned in the introduction, this condition can be dropped for maps with a completely invariant Fatou domain.

With small modifications the theorems above can be extended to the case of entire or 
meromorphic functions with finitely many critical  and asymptotic values. 
\subsection{The Voronoi case}\label{Voronoicase}
Now we prove of Theorem \ref{th.Norlundregular} which extends the ideas of the Abel versions of
Theorem \ref{TheoremA} and Theorem \ref{TheoremB} to the Voronoi case.  

\begin{proof}[Proof of Theorem \ref{th.Norlundregular}] 
As $|\sigma_n(c)|$ is N\"orlund regular with respect to the matrix $N=\{\frac{q_{n-
m}}{Q_n}\}$, it has radius of convergence $r>0.$
 Hence $$e_\lambda=(1-\lambda)\sum \lambda^n\sigma_n(c)\delta_{R^n(c)}$$ is a finite measure for $|\lambda|<r$. By Part (3) 
 of Lemma \ref{lemma.Norlund}  we have $$q(\lambda)\cdot e_\lambda=\nu_\lambda.$$
 Thus $e_\lambda$ can be extended to the open unit disk as a meromorphic family of measures which is holomorphic on a neighborhood of $[0,1)$. Let $\displaystyle{E_\lambda=\frac{\nu_\lambda}{q(\lambda)}}$ be the induced extension. In this way the function 
 $$A_{v}(z,\lambda)=\frac{1}{1-\lambda}\int_\C \gamma_a(z)de_\lambda(a)$$
 extends to
\begin{align*} E_{v}(z,\lambda)&=\frac{1}{q(\lambda)(1-\lambda)}\int_\C \gamma_a(z)d\nu_\lambda(a)
 \\&=\frac{1}{(1-\lambda)}\int_\C \gamma_a(z)dE_\lambda
 (a).\end{align*}
 In other words, for $z$ outside $P_c(R)\cup \{0,1,\infty\}$ the sequence 
$\{\sigma_n(c)\gamma_{R^n(v)}(z)\}$ is N\"orlund regular with respect to $N$. 

Like in the proofs of Lemma \ref{lm.struc.invariant} and Corollary \ref{cor.posrad}, we  extend  
$A_{v}(\tilde{c},\lambda)$ to a meromorphic function $E_{v}(\tilde{c},\lambda)$ for every critical point $\tilde{c}$  so that   
\begin{equation}\label{eq.theo14} B_{v}(z,\lambda)=E_{v}(z,\lambda)+\lambda \sum_{c_i\in Crit(R)} \frac{1}{R''(c_i)}E_{v}(c_i,
\lambda)B_{v_i}(z,\lambda)\tag{***}
 \end{equation}
 holds on a neighborhood of $[0,1).$

Under the assumptions, the arguments of  Proposition \ref{prop.strucintegrab} apply. \end{proof}

\begin{proof}[Proof of Theorem \ref{th.NorlundTop}]
We proceed as in Theorem \ref{TheoremB}, but we only need to apply 
Lemma \ref{lm.struc.invariant} and Proposition \ref{the.tec.principal} to Equation 
\eqref{eq.theo14}. If $R$ is structurally stable we get a contradiction to 
the assumptions proceeding as in Theorem \ref{TheoremB}.\end{proof}

So far we have consider the N\"orlund-Voronoi method to produce  finite non-zero measures. A general averaging mechanism can be hinted to establish the Fatou conjecture for a wider class of sequences.

Let $\Theta$ be the space of all complex sequences.
An infinity matrix $M$ is \textit{regular} if when restricted to $\mathcal{C}$, the space of converging sequences, it defines a continuous operator  such 
that if $\{t_n\}=M\{a_n\}$ then $\lim a_n=\lim t_n$. A basic fact here is Agnew's theorem which states that given $x,y\in \Theta$ if either 
\begin{enumerate}
 \item $x\in \ell_\infty \setminus \mathcal{C}$ and $y\in \ell_\infty$ or
 \item $x\in \Theta\setminus \ell_\infty$ and $y\in \Theta,$
 
 \end{enumerate}then there is regular matrix $M$ with $Mx=y$ (see Theorem 2.6.4 in
\cite{Boos}). The matrix $M$ is not unique. Moreover, the space of all regular matrices sending $x$ to $y$ is infinitely dimensional. 

Let us recall that $\ell_1$ acts on $\Theta$ 
 by convolutions. Take an infinite matrix $M$ subject to the following two conditions.
 
\begin{itemize}
\item[i)] If $c\in J(R)$ then $M$ transforms the measures 
$\rho_k=\sum_{n=0}^k\delta_{R^n(v)}\sigma_n(c)$  onto 
 measures $t_k$ with $\sup_k \|t_k\|<\infty$ and there is a non-zero $\alpha$ which is 
 a $*$-weak accumulation point of the sequence $\{t_k\}.$  
(Agnew's theorem guarantees the existence of such a matrix for non convergent 
complex sequences).

 \item[ii)]  The matrix $M$ commutes with the action of $\ell_1$ by convolutions. 
 \end{itemize}
When this happens, using the formal  Equation \eqref{eq.Poincare} we can show the 
instability of the corresponding rational map whenever $\alpha$ is an $M$-measure. 
Although the algebra of linear operators on $\Theta$ commuting with the 
action of $\ell_1$, by convolutions,  is non-separable and the space of 
regular matrices mapping $x$ to $y$ is infinitely dimensional, the authors were not able to find a matrix satisfying the conditions (i) and (ii) above.  On the other hand, 
most of the summation methods discussed, for instance, in the book by J. Boos \cite{Boos}
satisfy both conditions. In this paper, we used one of the most general summation methods.

\bibliographystyle{amsplain}
\bibliography{workbib}

\providecommand{\bysame}{\leavevmode\hbox to3em{\hrulefill}\thinspace}
\providecommand{\MR}{\relax\ifhmode\unskip\space\fi MR }
\providecommand{\MRhref}[2]{%
  \href{http://www.ams.org/mathscinet-getitem?mr=#1}{#2}
}
\providecommand{\href}[2]{#2}
\begin{thebibliography}{10}

\bibitem{AvilaInfPert}
A.~Avila, \emph{{Infinitesimal perturbations of rational maps}}, Nonlinearity
  \textbf{15} (2002), no.~3, 695--704.

\bibitem{AviLyuPosMea}
A.~Avila and M.~Lyubich, \emph{{Lebesgue measure of Feigenbaum Julia sets}},
  Arxiv Math 1504.02986 (2015).

\bibitem{Boos}
J.~Boos, \emph{{Classical and modern methods in summability}}, {Oxford
  Mathematical Monographs}, Oxford University Press, Oxford, 2000, Assisted by
  Peter Cass, Oxford Science Publications.

\bibitem{BruinStrienSumm}
H.~Bruin and S.~van Strien, \emph{{Expansion of derivatives in one-dimensional
  dynamics}}, Israel J. Math. \textbf{137} (2003), 223--263.

\bibitem{BuffCheritat}
X.~Buff and A.~Ch{\'e}ritat, \emph{{Ensembles de {J}ulia quadratiques de mesure
  de {L}ebesgue strictement positive}}, C. R. Math. Acad. Sci. Paris
  \textbf{341} (2005), no.~11, 669--674.

\bibitem{Adamrigidity}
A.~L. Epstein, \emph{{Infinitesimal Thurston rigidity and the Fatou-Shishikura
  inequality}}, arXiv:math/9902158 [math.DS] (1999).

\bibitem{GamelinUniform}
T.~W. Gamelin, \emph{{Uniform algebras}}, Prentice-Hall, Inc., Englewood
  Cliffs, N. J., 1969.

\bibitem{GardLakic}
F.~Gardiner and N.~Lakic, \emph{{Quasiconformal {T}eichm{\"u}ller theory}},
  {Mathematical Surveys and Monographs}, vol.~76, American Mathematical
  Society, Providence, RI, 2000. \MR{1730906 (2001d:32016)}

\bibitem{HardyDivergent}
G.H. Hardy, \emph{{Divergent series}}, {\'E}ditions Jacques Gabay, Sceaux,
  1992, With a preface by J. E. Littlewood and a note by L. S. Bosanquet,
  Reprint of the revised (1963) edition.

\bibitem{KrushkalQCRiem}
S.~L. Krushkal, \emph{{Quasiconformal mappings and {R}iemann surfaces}}, V. H.
  Winston \& Sons, Washington, D.C.; John Wiley \& Sons, New York-Toronto,
  Ont.-London, 1979, Edited by Irvin Kra, Translated from the Russian, A
  Halsted Press Book, Scripta Series in Mathematics, With a foreword by Lipman
  Bers.

\bibitem{LevinAnalytic}
G.~Levin, \emph{{On an analytic approach to the {F}atou conjecture}}, Fund.
  Math. \textbf{171} (2002), no.~2, 177--196.

\bibitem{LevinPert}
\bysame, \emph{Perturbations of weakly expanding critical orbits}, Frontiers in
  complex dynamics, Princeton Math. Ser., vol.~51, Princeton Univ. Press,
  Princeton, NJ, 2014, pp.~163--196.

\bibitem{LevinPrzytyckiShen}
G.~Levin, F.~Przytycki, and W.~Shen, \emph{The {L}yapunov exponent of
  holomorphic maps}, Invent. Math. \textbf{205} (2016), no.~2, 363--382.

\bibitem{LyuI}
M.~Lyubich, \emph{{Dynamics of quadratic polynomials. {I}, {II}}}, Acta Math.
  \textbf{178} (1997), no.~2, 185--247, 247--297.

\bibitem{Makarxiv}
P.~Makienko, \emph{{Remarks on Ruelle operator and invariant line field
  problem.}}, Arxiv Math 0110093 (2001).

\bibitem{MakRuelle}
\bysame, \emph{{Remarks on the Ruelle operator and the invariant line fields
  problem: II}}, Ergodic Theory and Dynamical Systems \textbf{25} (2005),
  no.~05, 1561--1581.

\bibitem{Man}
R.~Ma{\~n}{{\'e}}, \emph{{On a theorem of {F}atou}}, Bol. Soc. Brasil Mat.
  \textbf{NS} (1993), no.~24, 1--12.

\bibitem{MSS}
R.~Ma{\~n}{{\'e}}, P.~Sad, and D.~Sullivan, \emph{{On the dynamics of rational
  maps}}, Ann. Scien. Ec. Norm. Sup. Paris(4) (1983).

\bibitem{Mc1}
C.~McMullen, \emph{{Complex dynamics and renormalization}}, {Annals of
  Mathematics Studies}, vol. 135, Princeton University Press, Princeton, NJ,
  1994.

\bibitem{Mc2}
\bysame, \emph{{Renormalization and 3-manifolds which fiber over the circle}},
  {Annals of Mathematics Studies}, vol. 142, Princeton University Press,
  Princeton, NJ, 1996.

\bibitem{PrzyNonNegLya}
F.~Przytycki, \emph{Lyapunov characteristic exponents are nonnegative}, Proc.
  Amer. Math. Soc. \textbf{119} (1993), no.~1, 309--317.

\bibitem{RiveraSumma}
J.~Rivera-Letelier, \emph{{A connecting lemma for rational maps satisfying a
  no-growth condition}}, Ergodic Theory Dynam. Systems \textbf{27} (2007),
  no.~2, 595--636.

\bibitem{Shenrat}
W.~Shen, \emph{On the measurable dynamics of real rational functions}, Ergodic
  Theory Dynam. Systems \textbf{23} (2003), no.~3, 957--983.

\bibitem{VoronoiSums}
G.~F. Voronoi, \emph{{Extension of the notion of the limit of the sum of terms
  of an infinite series}}, Ann. of Math. (2) \textbf{33} (1932), no.~3,
  422--428.

\end{thebibliography}
\end{document}